\newcommand{\be}{\beta}
\newcommand{\la}{\lambda}
\renewcommand{\phi}{\varphi}
\newcommand{\si}{\sigma}
\newcommand{\om}{\omega}
\newcommand{\Si}{\Sigma}
\newcommand{\ZZ}{{\mathbb Z}}
\newcommand{\RR}{{\mathbb R}}
\newcommand{\fD}{\mathfrak D}
\newcommand{\fK}{\mathfrak K}
\newcommand{\ind}{\operatorname{ind}}
\newcommand{\sign}{\operatorname{sign}}
\newcommand{\sm}{\smallsetminus}
\newcommand{\co}{\colon}
\newcommand{\wh}{\widehat}
\newcommand*\wbar[1]{
  \hbox{ \kern-0.3em%
    \vbox{%
      \hrule height 0.5pt  
      \kern0.25ex
      \hbox{%
        \kern-0.15em
        \ensuremath{#1}%
        \kern-0.05em
      }%
    }%
  \kern0.05em}%
}
\newtheorem{theorem}{Theorem} 
\newtheorem{lemma}[theorem]{Lemma}
\newtheorem{proposition}[theorem]{Proposition}
\newtheorem{corollary}[theorem]{Corollary}
\newtheorem{conjecture}[theorem]{Conjecture}
\theoremstyle{definition}     
\newtheorem{definition}[theorem]{Definition}
\theoremstyle{remark}
\newtheorem{example}[theorem]{Example}
\title[Virtual knot cobordism and bounding the slice genus]{Virtual knot cobordism and bounding the slice genus}
\author[H. U. Boden]{Hans U. Boden}
\address{Mathematics \& Statistics, McMaster University, Hamilton, Ontario}
\email{boden@mcmaster.ca}
\urladdr{math.mcmaster.ca/~boden}
\author[M. Chrisman]{Micah Chrisman}
\address{Mathematics, Monmouth University, West Long Branch, New Jersey}
\email{mchrisma@monmouth.edu}
\urladdr{https://micah46.wixsite.com/micahknots}
\author[R. Gaudreau]{Robin Gaudreau}
\address{Mathematics \& Statistics, McMaster University, Hamilton, Ontario}
\email{gaudreai@mcmaster.ca}
\subjclass[2010]{Primary: 57M25, Secondary: 57M27}
\keywords{Virtual knots, cobordism, concordance, slice knots, slice genus, graded genus.}
\date{\today}                                           
\begin{document}
\maketitle

\begin{abstract}
In this paper, we compute the slice genus for many low-crossing virtual knots.  For instance, we show that 1295 out of 92800 virtual knots with 6 or fewer crossings are  slice, and that all but 248 of the rest are not slice. Key to these results are computations of Turaev's graded genus, which we show extends to give an invariant of virtual knot concordance. The graded genus is remarkably effective as a slice obstruction, and we develop an algorithm that applies virtual unknotting operations to determine the slice genus of many virtual knots with 6 or fewer crossings.  

\end{abstract}


\section*{Introduction}
Classical knot concordance is a subject that sits on the interface of 3- and 4-dimensional topology. Consequently, it exhibits some spectacular and peculiar phenomena reflective of the contrasting forces of smooth and topological manifolds in dimension four.  
There is a deep relationship between concordance of knots and homology cobordisms of 3-manifolds, and 
many of the breakthroughs in Khovanov and Floer homology theories have led to new concordance invariants of knots. These in turn have provided a more detailed and nuanced view of the intricate interplay between topological and smooth concordance for knots in dimension three. 

In \cite{Turaev-2004, Turaev-2008a}, Turaev developed the theory of knots in surfaces,  extending many notions, including concordance, to the new setting. Along the way, he introduced many new ideas and techniques. One key invariant  that has no counterpart in the classical theory is the graded genus $\vartheta(K)$ (see \cite{Turaev-2008a}).  We will show that it is invariant under concordance of virtual knots, and although it is rather difficult to compute, it turns out to be extremely effective as a slice obstruction.

Virtual knots were introduced  by Kauffman in his landmark paper \cite{Kauffman-1999}. He and many others have contributed to the rapid development of virtual knot theory. Those efforts have produced a plethora of newfangled invariants, along with new combinatorial notions of ``knottiness'' such as flat knots, free knots, knotoids, etc. The field has experienced a veritable explosion of research output. In his AMS Notices article, Sam Nelson dubbed it  ``the combinatorial revolution in knot theory'' \cite{Nelson-2011}. The advancement involved a creative mix of topological insight and algebraic/combinatorial ingenuity. As a byproduct, we now have more virtual knot invariants than ever before, and the challenge for geometric topologists is to figure out what sort of geometric information the many new invariants encode. Virtual knot concordance is an ideal setting for such an investigation.

Concordance of virtual knots was introduced in \cite{Carter-Kamada-Saito} by Carter, Kamada, and Saito, and it dovetails nicely with Turaev's theory  of concordance of knots in surfaces. Kauffman explored the notion of virtual knot cobordism in \cite{Kauffman-2015}, and Khovanov homology was generalized to virtual knots by  Manturov in \cite{Manturov-2007}. In \cite{Dye-Kaestner-Kauffman-2014}, they extended the Rasmussen invariant to virtual knots and used it to bound the slice genus (see also~\cite{Rushworth-2017}). Despite these successes, very few virtual knot invariants are known to be invariant under concordance, and our understanding of virtual knot concordance is still rather limited. Many fundamental questions remain unanswered, such as: (1) Which virtual knots are slice, and (2) Can one compute the virtual slice genus? In this paper, we develop practical and computational methods to address these questions for low-crossing virtual knots.

Answers to these questions for classical knots have been known for  some time. For instance at the time of writing, information on the 4-ball genus is available for nearly all classical knots with up to 12 crossings from the online resource \emph{Knotinfo} \cite{knotinfo}. 

The methods we develop here will produce bushel baskets of slice virtual knots with 4, 5 and 6 crossings and an even larger class of virtual knots with slice genus $g_s(K) =1.$ They also give effective bounds on the slice genus for the other low-crossing virtual knots. 

Our strategy is a blend of traditional and computational techniques. Specifically, we obtain lower bounds on $g_s(K)$ using Turaev's graded genus. We obtain upper bounds
by applying operations to the arrows of a Gauss diagram (see Figures \ref{GD-uo} and \ref{GD-crossed-saddle}) that give rise to genus one cobordisms from $K$ to the unknot. 
Because of the sheer number of calculations, we rely on computational methods to determine the graded genus and to execute searches over all possible unknotting operations. These are implemented in \emph{Mathematica} \cite{Mathematica}.

The following comparison between classical knots and virtual knots helps to  illustrate the challenge here.
Up to symmetry, there are 7 nontrivial classical knots with  six or fewer crossings. Exactly one of them is slice (the stevedore knot $6_1$), five others have 4-ball genus one, and one has 4-ball genus two (the torus knot $5_1$).  
In comparison, up to symmetry, there are 92800 nontrivial virtual knots with six or fewer crossings.  We will see that at least 1295 are slice, and another 50478 have slice genus one. 
The following tables summarize our findings.

\begin{table}[H]  
\begin{tabular}{|c|c|c|c|c|}
\hline
 Crossing & Virtual &   &   &      \\
number & knots & $\vartheta=0$ & slice & unknown  \\
\hline \hline
2 & 1 & 0 & 0 & 0\\ \hline
3 & 7 & 1 & 0 & 0\\ \hline
4 & 108 & 15 & 13 & 1\\ \hline
5 & 2448 & 59 & 45 & 11\\ \hline
6 & 90235 & 1476 & 1237 & 236\\ \hline
\end{tabular}
\bigskip
\caption{Virtual knots, graded genus, and sliceness.} \label{table-1}
\end{table}

The first table lists the total number of virtual knots, the number with graded genus zero, the number that are slice, and the number of unknown slice status. The results indicate just how often the graded genus $\vartheta$ correctly predicts whether a virtual knot is slice. The actual slicings of the virtual knots were performed by hand, and  for virtual knots up to 5 crossings they can be found at the end of the paper.
(All of the slice virtual knots we found are actually ribbon.)

The second table is the result of applying an unknotting algorithm to determine which virtual knots have slice genus $g_s(K) \leq 1.$ For negative knots, it also applies Theorem 6.8 of \cite{Dye-Kaestner-Kauffman-2014}, which computes the slice genus on the nose. 
Note that for positive or negative virtual knots with six crossings, Theorem \ref{g-s-bound} shows that they are in fact the only ones with slice genus equal to three.

\begin{table}[H]  
\begin{tabular}{|c|c|c|c|c|c|c|}
\hline
 Crossing & Virtual &   &   &   &   &    \\
 number & knots & $g_s=0$ & $g_s=1$ & $g_s=2$ & $g_s=3$ & unknown  \\
\hline \hline
2 & 1 & 0 & 1 & 0 & 0 & 0\\ \hline
3 & 7 & 0 & 7 & 0 & 0& 0\\ \hline
4 & 108 & 13 & 15 & 79 & 0 & 1 \\ \hline
5 & 2448 & 45 & 1805 & 206&  0 & 392\\ \hline
6 & 90235 & 1237 & 48650 & 4107 & 1751& 34490 \\ \hline
\end{tabular}
\bigskip
\caption{The slice genus of virtual knots.} \label{table-2}
\end{table}

We now give a brief overview of the contents of the rest of this paper.
In section \ref{sec-vkc}, we introduce cobordism and concordance of virtual knots and the virtual unknotting operations. We present a general technique for slicing and cobording virtual knots in terms of Gauss diagrams, and the main result is Theorem \ref{g-s-bound}, which provides an upper bound on the slice genus of a virtual knot in terms of its crossing number. 
In Section \ref{sec-concordance}, we  show that several invariants of virtual knots are invariant under virtual knot concordance, including the writhe polynomial $W_K(t)$ (see \S \ref{ssec-writhe}) and the graded genus $\vartheta(K)$ (see \S \ref{ssec-gg}). We also outline an algorithm for computing $\vartheta(K)$. In  Section \ref{sec-discussion}, we give a brief discussion of open problems suggested by our results. 
Table \ref{table-3} and Figure \ref{fig-slice} appears at the end of the paper; the first is a list of the graded genus and slice genus for virtual knots up to four crossings, and the second depicts slicings of all known slice virtual knots up to five crossings. Datasets of the results for virtual knots up to six crossings, including slicings of 1237 virtual knots with six crossings, can be found online at \cite{Boden-Chrisman-Gaudreau-2017t}.

In this paper, we work in the smooth category, and we use decimals (e.g. 2.1 and 4.98) to refer to specific virtual knots in Green's tabulation \cite{Green}.


\section{Virtual knots, cobordism, and sliceness} \label{sec-vkc}
\subsection{Virtual knot cobordism} \label{cob-conc}
A virtual link diagram is a regular immersion of a disjoint union of circles into the plane. Each double point of the immersion is either a classical crossing or a virtual crossing. Classical crossings are drawn with one arc passing over the other, and virtual crossings are encircled. Two virtual link diagrams are equivalent if one can be transformed into the other by a sequence of planar isotopies, Reidemeister moves ($r1$)--($r3$), and virtual Reidemeister moves ($v1$)--($v4$). A virtual link is an equivalence class of diagrams under this relation.

\begin{figure}[ht]
\def\svgwidth{1.00\textwidth} 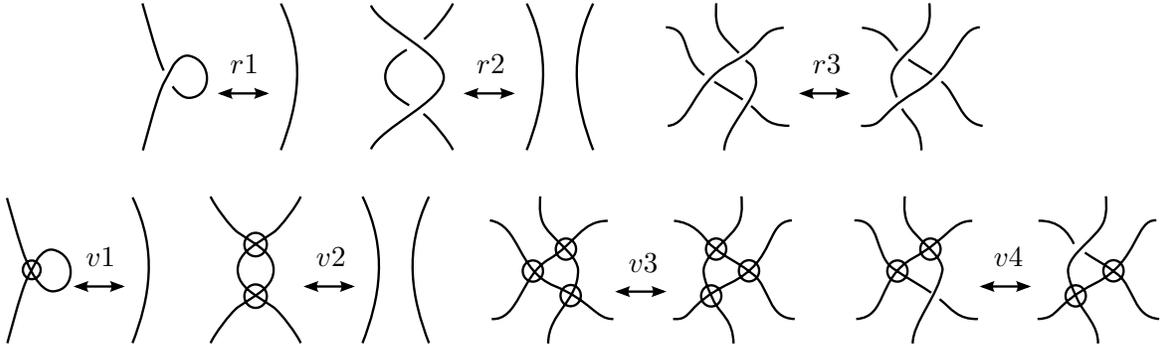
\caption{Generalized Reidemeister moves.}
\label{cob-movie}
\end{figure}

We will assume that all virtual knots and links here are oriented, indicated by placing an arrow on each component. We use $-K$ to denote the virtual knot or link with the opposite orientation.

Given two virtual knots $K_0$ and $K_1$, a cobordism between them is  a finite sequence of births, deaths, and saddles that transforms $K_0$ to $K_1$. The cobordism can be used to construct a connected surface co-bounding $K_0 \sqcup -K_1$, and the genus of this surface is $(s-b-d)/2$, where $s$ is the number of saddles, $b$ is the number of births, and $d$ is the number of deaths (see \cite{Chrisman-2016} for more details).

\begin{figure}[h]
\centering
 \includegraphics[scale=1.00]{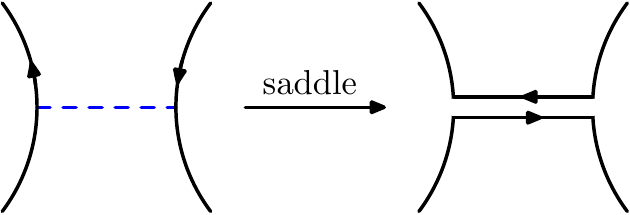} 
\end{figure} 

Virtual knots can also be represented as knots in thickened surfaces, and this gives a natural geometric context for studying cobordism and concordance of virtual knots. We take a moment to describe this approach, following \cite{Kamada-Kamada-2000} and \cite{Turaev-2008a}.

Given a virtual knot diagram, we can construct an oriented surface by thickening the arcs into bands, which are allowed to intersect at all real crossings and which overlap one another without intersecting at each virtual crossing. The resulting band surface has boundary a union of circles, and filling in its boundary components with 2-disks we obtain a closed oriented
surface $\Si$ together with a knot in $\Si \times I$, which is well-defined up to stable equivalence. (Stable equivalence is the addition or removal of one-handles to $\Si$ disjoint from the knot diagram.) 

In \cite{Carter-Kamada-Saito} they establish a one-to-one correspondence between virtual knots  
and stable equivalence classes of knots in thickened surfaces; they also  
develop corresponding notions of cobordism and concordance for knots in thickened surfaces  equivalent to those defined above in terms of virtual knot diagrams (see also \cite{Turaev-2008a}). 
Very briefly, 
two knots $\fK_0$ in $\Si_0 \times I$  and $\fK_1$ in $ \Si_1 \times I$ are said to be \emph{cobordant} if there exists a compact oriented $3$-manifold $M$ with  
$\partial M \cong -\Si_0 \sqcup \Si_1$ and an oriented 2-manifold $A$ embedded in $M \times I$ with $\partial A = -\fK_0 \sqcup \fK_1$. In case $A$ is an annulus, then $\fK_0$ and $\fK_1$ are said to be \emph{concordant}.

As with classical knots, there exists a cobordism from any virtual knot $K$ to the unknot \cite{Kauffman-2015}. One way to see this is to perform the ${\bf cd}$ move in Figures \ref{uo} and \ref{cd} to every classical crossing in  $K$. Conversely, every virtual knot can be obtained by a cobordism from the unknot, hence any two virtual knots are cobordant to one another.

\begin{definition}
The \emph{slice genus} of a virtual knot $K$, denoted $g_s(K)$, is the minimum genus over all cobordisms from $K$ to the unknot. 
 \end{definition}

Two virtual knots $K_0$ and $K_1$ are said to be \emph{concordant} if there is a genus zero cobordism between them, i.e. a sequence of $b$ births, $d$ deaths, and $s$ saddles connecting $K_0$ to $K_1$ such that 
\begin{equation}
s=b+d. 
\end{equation}
It is not true that every virtual knot is concordant to the unknot, and virtual knots that 
are concordant to the unknot are called \emph{slice}. 
A virtual knot $K$ is slice if and only if $g_s(K)=0.$

A \emph{ribbon concordance} from $K_0$ to $K_1$ is one with only saddles and deaths, and  virtual knots ribbon concordant to the unknot are called 
\emph{ribbon}. Just as with classical knots, every virtual knot that is ribbon is slice, and it is not known whether every virtual knot that is slice is in fact ribbon.

\subsection{Virtual unknotting operations}
The unknotting number of $K$ is a simple and yet elusive invariant of classical knots. Every classical knot $K$ can be changed to the unknot by a finite sequence of crossing changes, and the unknotting number of $K$ is defined as the minimum number of crossing changes needed to change $K$ to the unknot, taken over all knot diagrams. A standard argument shows that 
$u(K) \leq n/2$, where $u(K)$ is the
unknotting number of $K$ and $n$ is the number of crossings of $K$.  By another well-known argument, it is bounded below by the 4-ball genus, i.e. $g_4(K) \leq u(K)$, where  $g_4(K)$ denotes the 4-ball genus of $K$.  

Unlike classical knots, not all virtual knots can be unknotted by crossing changes. Indeed, crossing change does not alter the homotopy class of the virtual knot, and some virtual knots are homotopically nontrivial and therefore not unknottable using crossing changes. The theory of virtual knots up to crossing changes is called \emph{flat knot theory}, and flat knots can even be non-trivial up to concordance, as shown in \cite{Turaev-2004}. 

\begin{figure}[h]
\includegraphics[scale=0.75]{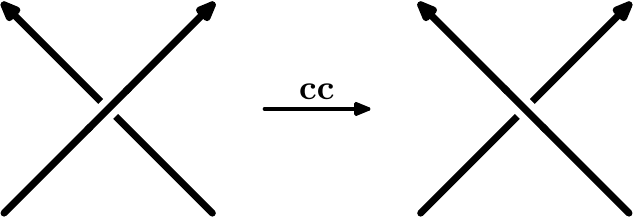} \qquad  \qquad 
\includegraphics[scale=0.75]{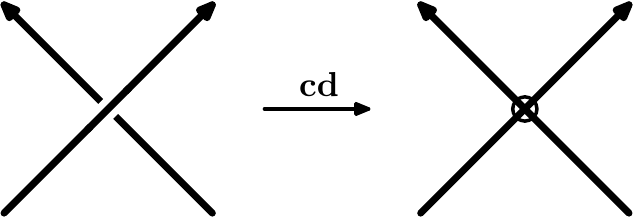}

\bigskip
\includegraphics[scale=0.75]{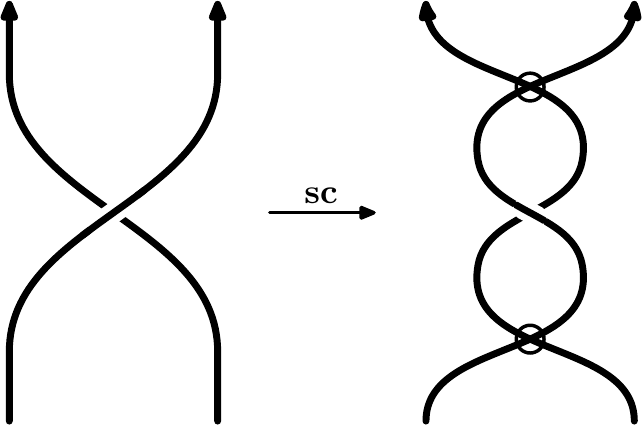}  \qquad  \qquad 
\includegraphics[scale=0.75]{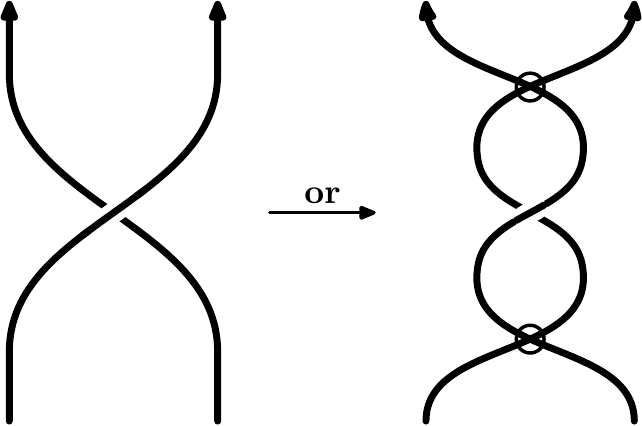} 
\caption{Four unknotting operations: ${\bf cc} = $ crossing change, ${\bf cd} = $ chord deletion, ${\bf sc} = $ sign change, and ${\bf or} = $ orientation reversal. }
\label{uo}
\end{figure}

Using the four operations in Figure \ref{uo}, one can unknot any virtual knot. In fact, the chord deletion alone is sufficient to unknot any virtual knot. To see this, apply {\bf cd} to all real crossings, and notice that any virtual knot diagram with only virtual crossings is trivial. One way to see that is to directly apply virtual Reidemeister moves to the virtual knot diagram to remove all the virtual crossings, another way is to represent the given virtual knot in terms of its Gauss diagram, which will have no chords and is therefore trivial.

The inclusion of the other three moves allows one to more quickly simplify a given virtual knot, and we define the \emph{virtual unknotting number} of a virtual knot $K$, denoted $u_v(K)$, to be the minimum number of applications of $\{{\bf cc}, {\bf cd}, {\bf sc}, {\bf or}\}$ that are needed to convert it to the unknot, with the minimum taken over all diagrams representing $K$. The preceding argument shows that $u_v(K) \leq n,$ where $n$ is the crossing number of $K$. One can improve the inequality slightly to  $u_v(K) \leq n-1$, since any virtual knot diagram with only one crossing represents the unknot.  

One can also define the \emph{virtual slicing number} for a virtual knot $K$ to be the minimum number of applications of $\{{\bf cc}, {\bf cd}, {\bf sc}, {\bf or}\}$ that are needed to convert it to a slice knot. Denoting this number by $s_v(K)$, we note that it is bounded above by the virtual unknotting number and below by the slice genus, i.e.  
$$g_s(K) \leq s_v(K) \leq u_v(K).$$ 
The second inequality is immediate and the first will be explained in the next subsection.

\subsection{Gauss diagrams}
A Gauss diagram is a decorated trivalent graph consisting of one or more core circles, oriented counterclockwise, together with a finite collection of directed chords decorated with signs ($\pm$) connecting distinct pairs of points on the circles. Each core circle represents a component of the virtual link diagram, and directed chords connect the pre-images of double points corresponding to the classical crossings. Chords point from the over-crossing arc to the under-crossing arc and their sign indicates whether the crossing is right-handed ($+$) or left-handed ($-$). Thus associated to every virtual link diagram is a Gauss diagram, and given a Gauss diagram, one can draw a virtual link diagram realizing it. 
The generalized Reidemeister moves can be translated into moves between Gauss diagrams (cf. Figure \ref{GD-RM2}), and in this way we see that a virtual knot or link is an equivalence class of Gauss diagrams under the resulting equivalence. 

\begin{figure}[H]
\includegraphics[scale=0.80]{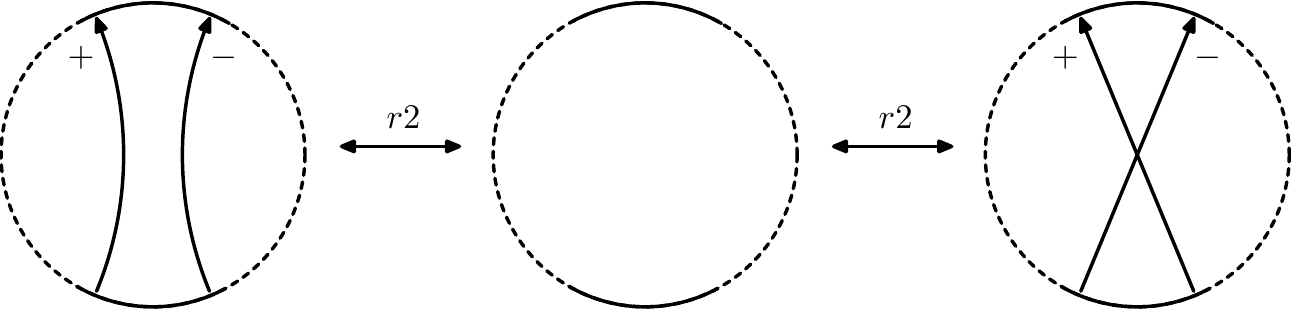} 
\caption{Reidemeister 2 moves on Gauss diagrams.}
\label{GD-RM2}
\end{figure}

\begin{figure}[h]
\includegraphics[scale=0.770]{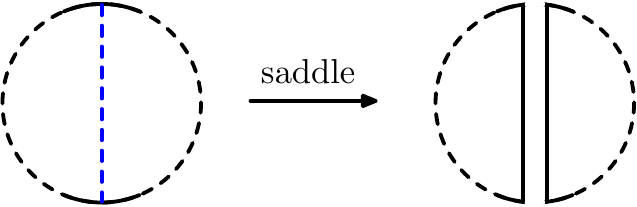} \qquad  
\includegraphics[scale=0.770]{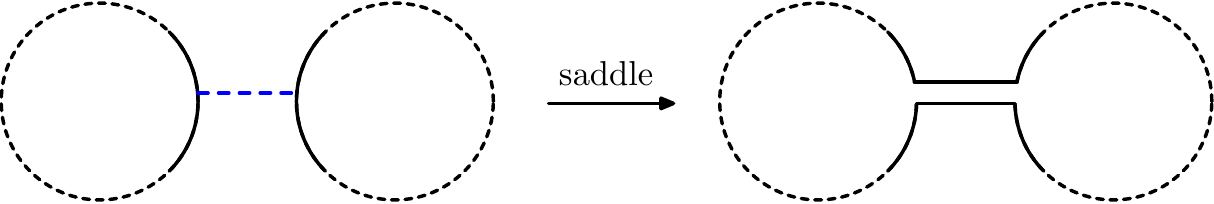}  
\caption{A fission saddle (left) and a fusion saddle (right) on Gauss diagrams.}
\label{GD-saddle}
\end{figure}

Cobordism and concordance of virtual knots can be performed on the Gauss diagrams, and it is a convenient alternative to working on virtual knot diagrams. Figure \ref{GD-saddle} shows a fission and fusion saddles, and  
Figure \ref{cob-movie} gives a cobordism movie that shows the virtual knot 3.1 has slice genus one.
Above are the moves on the virtual knot diagrams, and below them are the corresponding moves on the Gauss diagram.
 
\begin{figure}[ht]
\def\svgwidth{0.95\textwidth} 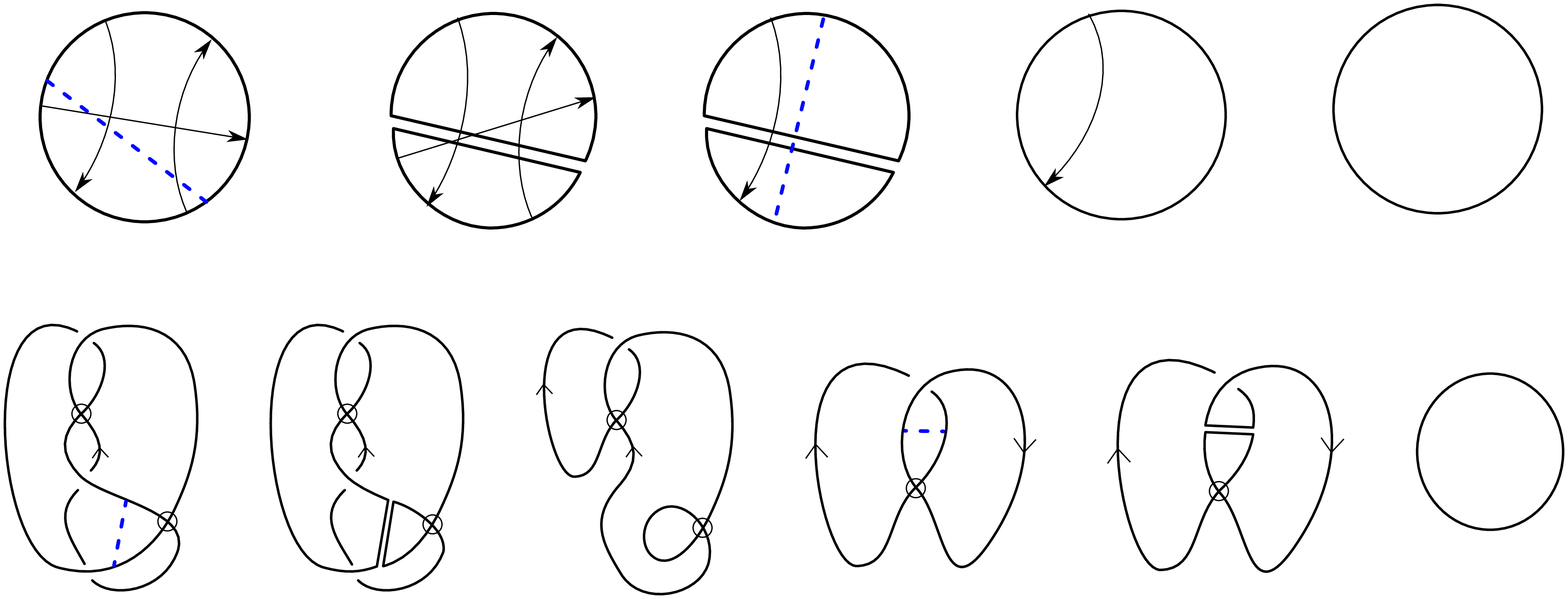
\caption{A movie of a genus one cobordism of the virtual knot 3.1.}
\label{cob-movie}
\end{figure}

Using the same technique, one can slice virtual knots by applying saddles, births, and deaths to their Gauss diagrams. Figure \ref{4-71} illustrates the saddle move and shows the virtual knot 4.71 is slice. The saddle is depicted on the virtual knot diagram and its Gauss diagram, and it is not difficult to see from either approach that the resulting two component link is trivial under Reidemeister moves. Slicings for all other known slice virtual knots with crossing number 4 and 5 are given in Table \ref{fig-slice} at the end of this paper. For slicings of six crossing knots, see \cite{Boden-Chrisman-Gaudreau-2017t}.

\begin{figure}[ht]
\includegraphics[scale=0.90]{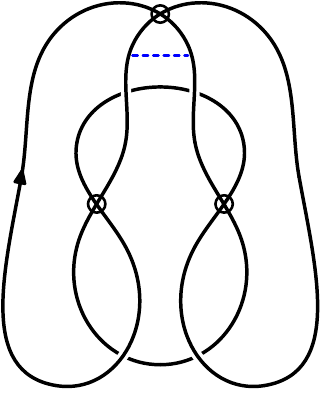}  \qquad  \qquad 
\includegraphics[scale=1.00]{Figures/GD-4-71} 
\caption{The virtual knot 4.71 is slice.}
\label{4-71}
\end{figure}

The four unknotting operations can also be viewed as moves on Gauss diagrams, see Figure \ref{GD-uo}, and each one can be realized by a genus one cobordism. This is in fact well-known for the crossing change in classical knot theory, and it is the reason for the relationship the 4-ball genus $g_4(K)$ of a knot and its unknotting number $u(K)$ mentioned earlier. For the other three virtual unknotting operations, one can realize them as genus one cobordisms by applying a fission saddle followed by a fusion saddle.

\begin{figure}[h]
\includegraphics[scale=0.80]{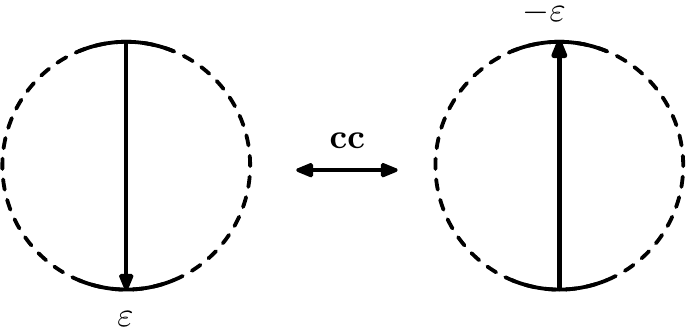} \qquad \qquad  
\includegraphics[scale=0.80]{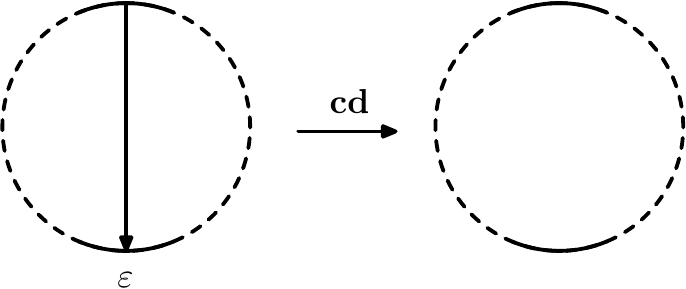}

\bigskip
\includegraphics[scale=0.80]{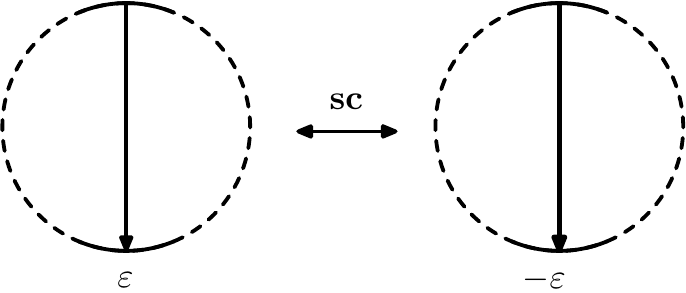}  \qquad  \qquad 
\includegraphics[scale=0.80]{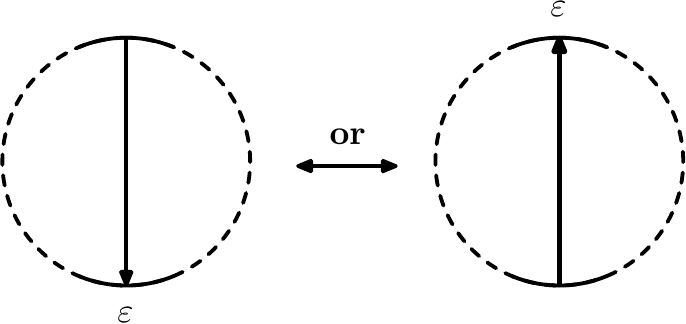} 
\caption{The four unknotting operations on Gauss diagrams.}
\label{GD-uo}
\end{figure}

For instance, consider the chord deletion operation ${\bf cd}$. Below is a movie that realizes ${\bf cd}$ as a genus one cobordism. 
The top row shows the moves on the Gauss diagram,
and below it are the corresponding moves on the virtual knot diagram. 
Similar arguments show that the other unknotting operations can be realized as genus one cobordisms.
In the other cases, use Reidemeister moves to insert the appropriate chord right before the second saddle.   

\bigskip 
\begin{figure}[H]
\def\svgwidth{0.95\textwidth} 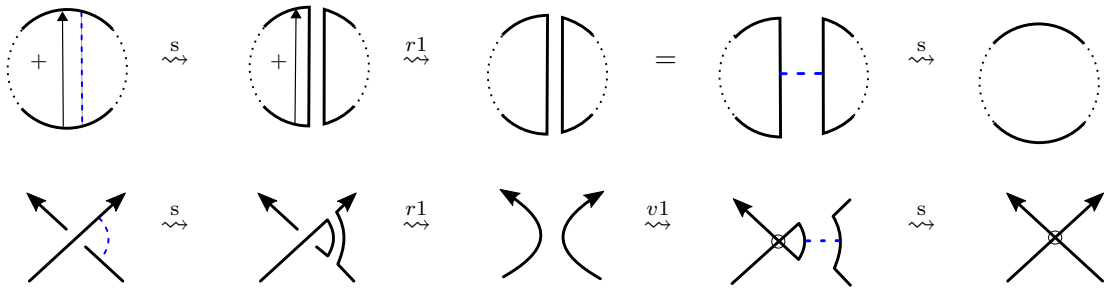
\caption{Chord deletion realized as a genus one cobordism.}
\label{cd}
\end{figure}

In general, two saddle moves applied to a Gauss diagram will result in a genus one cobordism provided that the first is a fission saddle and the second is a fusion saddle. This occurs when the saddles are performed along intersecting chords, hence, we call this operation the \emph{crossed saddle move}. It can be used to cancel a pair of intersecting chords in a genus one cobordism. The idea is to place the two saddles parallel to the two chords and  use $r1$ moves to eliminate each of the chords after performing the saddles. In the next subsection, we will use the crossed saddle move to obtain upper bounds on the slice genus.
\begin{figure}[h]
\includegraphics[scale=0.85]{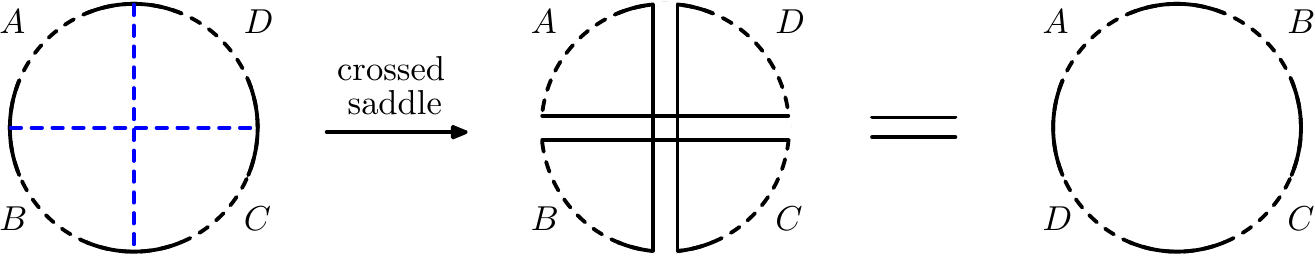}
\caption{The crossed saddle move.}
\label{GD-crossed-saddle}
\end{figure}

\subsection{Upper bounds on the slice genus}
In this subsection, we explain how to use the crossed saddle move to obtain upper bounds on the slice genus of a virtual knot $K$ in terms of its crossing number. 

\begin{theorem} \label{g-s-bound} 
If $K$ is a virtual knot with crossing number n,
then $g_s(K) \leq n/2.$ If $K$ contains both positive and negative crossings, then $g_s(K) \leq (n-1)/2$.
\end{theorem}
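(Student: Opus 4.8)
The plan is to prove both bounds by reducing a chosen diagram of $K$ to the unknot using two kinds of moves on Gauss diagrams: the genus-one \emph{crossed saddle} of Figure~\ref{GD-crossed-saddle}, which cancels a pair of intersecting chords at the cost of one unit of genus, together with the genus-zero Reidemeister moves. Two of the latter will matter: an $r1$ move, which deletes a single chord whose two endpoints cobound an empty arc on the core circle (a curl), and an $r2$ move (Figure~\ref{GD-RM2}), which deletes a pair of \emph{parallel} (non-intersecting) chords of \emph{opposite} sign. Since the crossing number is the minimum over all diagrams, I would fix a diagram of $K$ with exactly $n$ chords and argue by strong induction on $n$, noting that each crossed saddle lowers the chord count by two and each $r1$ lowers it by one.

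For the first bound I would show, by strong induction on the number $n$ of chords, that any Gauss diagram with $n$ chords admits a cobordism to the unknot of genus at most $\lfloor n/2\rfloor$. If some two chords intersect, apply a crossed saddle to cancel them; this contributes genus one and leaves a diagram with $n-2$ chords, which by induction needs genus at most $\lfloor (n-2)/2\rfloor$, for a total of $1+\lfloor (n-2)/2\rfloor=\lfloor n/2\rfloor$. If no two chords intersect, then the chords are pairwise non-crossing, so there is an innermost chord cobounding an empty arc; this is a curl, which an $r1$ move removes at no cost in genus, and iterating shows the diagram is in fact the unknot. In either case the genus is at most $\lfloor n/2\rfloor$, which as an integer is at most $n/2$, giving $g_s(K)\leq n/2$. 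Note that the odd leftover chord is absorbed by a free $r1$ at the base case $n=1$, rather than by a costly chord deletion, which is why the bound $n/2$ (and not $\lceil n/2\rceil$) is attained.

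For the second bound, assume $K$ has both a positive and a negative chord. The content is an improvement only when $n$ is even, since for odd $n$ the integer bound $\lfloor n/2\rfloor$ already equals $\lfloor (n-1)/2\rfloor$. The plan is to cancel one pair of chords \emph{for free}, i.e.\ by a genus-zero move, and then apply the first bound to the remaining $n-2$ crossings, giving total genus at most $\lfloor (n-2)/2\rfloor=n/2-1=\lfloor (n-1)/2\rfloor$. Concretely, I would locate a positive chord and a negative chord and use Reidemeister and virtual moves to bring them into the parallel, opposite-sign configuration of the $r2$ move, then cancel them at no cost in genus; failing that, I would try to force two separate $r1$ reductions during the inductive process. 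Either route saves exactly one unit of genus relative to the first bound.

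The hard part is this last step: guaranteeing that the mixed-sign hypothesis really does produce an available genus-zero cancellation. Unlike the first bound, where the dichotomy ``some chords cross or none do'' is automatic, here I must show that opposite signs can always be exploited, and a short Euler-characteristic count ($g=(s-b-d)/2$ with $s=2$) shows that the \emph{only} way to eliminate two chords at genus zero is an $r2$ move, which forces an opposite-sign parallel pair. The contrast with the virtual trefoil $2.1$ --- whose two chords have the \emph{same} sign, admit no free move, and give $g_s=1$ --- shows that the sign condition is essential and that no purely numerical counting will suffice. One must instead track how signs and arrow directions interact with the $r2$ (and $r3$) moves to certify that, whenever both signs occur, a parallel opposite-sign pair or a pair of removable curls can always be exposed. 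Carrying out this configuration analysis on Gauss diagrams is where the real work of the theorem lies.
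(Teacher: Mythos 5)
Your argument for the first inequality is essentially the paper's: cancel an intersecting pair by a crossed saddle at the cost of genus one, and observe that a diagram with pairwise non-intersecting chords reduces to the unknot by $r1$ moves alone. That part is correct and complete.

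The second inequality is where your proposal breaks down, and you have correctly sensed that this is where the work lies --- but the mechanism you propose cannot be made to work. You want to exhibit a \emph{genus-zero} cancellation of one positive--negative pair (an $r2$ move, or two $r1$ moves) and then invoke the first bound on the remaining $n-2$ chords. Any such genus-zero cancellation strictly decreases the number of classical crossings of the diagram, so if it were always available for a mixed-sign diagram realizing the crossing number, that crossing number would not be minimal. The figure-eight knot $4.108$ is a concrete counterexample to the strategy: its minimal diagram has two positive and two negative crossings, no $r2$ or $r1$ reduction is available (its crossing number is $4$), yet the theorem asserts $g_s \leq 3/2$, i.e.\ $g_s\leq 1$. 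So no amount of ``configuration analysis'' will expose the free cancellation you are looking for; the saved half-unit of genus does not come at the start of the cobordism. The paper instead runs the same crossed-saddle induction as in the first bound, but chooses each intersecting pair so that the diagram retains \emph{both} signs after every step; the saving then occurs at the very end, because a two-chord diagram with chords of opposite sign represents the unknot (unlike $2.1$, whose chords have equal sign), so the last pair is eliminated at genus zero rather than genus one. This requires a case analysis on the sign pattern of the available intersecting pairs (e.g.\ when there is only one positive chord and all negative chords are parallel to one another and cross it, no sign-preserving crossed saddle exists, and one instead performs a single chord deletion on the positive chord, which is itself a genus-one cobordism to the unknot). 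Your proposal is missing both the correct location of the saving and this case analysis.
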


Before proceeding with the proof, we explore some of the immediate applications. For instance, any virtual knot $K$ with three or fewer crossings must have slice genus $g_s(K) \leq 1,$ and any virtual knot $K$ with five or fewer crossings must have slice genus $g_s(K) \leq 2.$ Combined with computations of the graded genus listed in Table \ref{table-3}, this shows that most virtual knots with four or fewer crossings have slice genus one.
 
\begin{corollary} 
Suppose $K$ is a virtual knot with $n=2m$ crossings and maximal slice genus $g_s(K)=m$. If $D$ is a Gauss diagram  representing $K$ with $n$ chords, then the chords of $D$ are either all positive or all negative.
\end{corollary}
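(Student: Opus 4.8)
The plan is to prove the contrapositive and to extract an integrality contradiction from the second half of Theorem~\ref{g-s-bound}. Suppose, toward a contradiction, that the Gauss diagram $D$ (which has $n = 2m$ chords) contains at least one positive chord and at least one negative chord. I will show this forces $g_s(K) \le m-1$, contradicting the hypothesis $g_s(K) = m$.

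First I would record that, since the crossing number of $K$ is $n$ and $D$ represents $K$ with exactly $n$ chords, $D$ is a minimal-crossing diagram of $K$; in particular it realizes the crossing number appearing in Theorem~\ref{g-s-bound}. As $D$ contains chords of both signs, the hypothesis of the second clause of that theorem is satisfied, and applying it gives
\[
g_s(K) \le \frac{n-1}{2} = \frac{2m-1}{2} = m - \tfrac{1}{2}.
\]
Next I would invoke the integrality of the slice genus: any cobordism from $K$ to the unknot has genus $(s-b-d)/2 \in \ZZ_{\ge 0}$, so $g_s(K)$ is a non-negative integer. Combined with the bound above, this forces $g_s(K) \le m-1$, contradicting $g_s(K) = m$. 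Hence $D$ cannot contain chords of both signs, and the chords of $D$ are either all positive or all negative.

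The only delicate point — and the step I expect to require the most care, though it is minor — is to justify that the second inequality of Theorem~\ref{g-s-bound} may be applied to this particular diagram $D$, rather than to some unspecified $n$-crossing diagram of $K$. This is exactly why the minimality remark is needed: the crossed-saddle construction underlying the theorem operates on whatever $n$-crossing diagram one feeds it, and here $D$ is such a diagram that by assumption already exhibits both signs, so the half-integer bound applies verbatim to $D$. Once that is in place, rounding the bound $m - \tfrac{1}{2}$ down to an integer finishes the argument.
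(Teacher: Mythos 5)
Your argument is correct and is exactly the deduction the paper intends (the corollary is stated without proof as an immediate consequence of Theorem~\ref{g-s-bound}): apply the second inequality of the theorem to the $n$-chord diagram $D$ to get $g_s(K)\le (2m-1)/2$, then use integrality of the slice genus to conclude $g_s(K)\le m-1$, contradicting maximality. Your side remark about applying the theorem to the specific diagram $D$ is well taken and is justified by the theorem's proof, which establishes the bound for any Gauss diagram with $n$ chords containing both signs.
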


This corollary is particularly useful when combined with the following result of Dye, Kaestner, and Kauffman \cite[Theorem 6.8]{Dye-Kaestner-Kauffman-2014}, which they proved using the virtual Rasmussen invariant arising from Khovanov homology for virtual knots.
Note that it is stated for positive virtual knots, but it holds equally well for negative virtual knots.

\begin{theorem}[Dye, Kaestner, Kauffman] \label{thm-DKK}
If $K$ is a virtual knot with all positive crossings, then $g_s(K) = (n-r+1)/2$, where $n$ is the number of real crossings and $r$ is the number of virtual Seifert circles.
\end{theorem}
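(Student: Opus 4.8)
The plan is to prove the equality by squeezing $g_s(K)$ between matching upper and lower bounds: the upper bound by an explicit cobordism built from Seifert's algorithm, and the lower bound by the virtual Rasmussen invariant. I would first record that the upper bound $g_s(K) \le (n-r+1)/2$ holds for \emph{any} diagram, positive or not; positivity enters only to force the lower bound to match.

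For the upper bound, fix a diagram $D$ of $K$ with $n$ classical crossings and apply Seifert's algorithm by oriented-smoothing every classical crossing. Each oriented smoothing is realized by a single saddle move (a band attachment reconnecting the two strands compatibly with orientation), so the $n$ smoothings assemble into a cobordism of $n$ saddles carrying $K$ to its Seifert state $S$. By construction $S$ is a diagram with $r$ components (the virtual Seifert circles) having only virtual crossings, so its Gauss diagram has $r$ core circles and \emph{no} chords; by the remark in \S\ref{sec-vkc} that a chordless Gauss diagram is trivial, $S$ is the $r$-component unlink. Capping off $r-1$ of these circles with deaths produces the unknot. This cobordism from $K$ to the unknot uses $s=n$ saddles, $b=0$ births, and $d=r-1$ deaths, hence has genus $(s-b-d)/2 = (n-r+1)/2$, giving $g_s(K) \le (n-r+1)/2$. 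The parity check $n \equiv r-1 \pmod 2$, forced because $n$ saddles change the component count from $1$ to $r$, confirms the right-hand side is an integer.

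For the lower bound I would invoke the virtual Rasmussen invariant $s(K)$ that Dye, Kaestner, and Kauffman extract from Manturov's Khovanov homology. Two inputs are needed: (i) a slice--Bennequin-type inequality $g_s(K) \ge s(K)/2$, obtained by checking that the Khovanov--Manturov complex is functorial under the elementary cobordism moves and that the saddle, birth, and death maps shift the quantum grading in the expected way, so that a genus-$g$ cobordism to the unknot forces $s(K) \le 2g$; and (ii) the computation $s(K) = n-r+1$ for a positive diagram. For (ii) one analyzes the Lee-type deformation: in a positive diagram the oriented (Seifert) resolution is the all-$0$ resolution, sitting in homological degree $0$, so the canonical Lee generators live there, and tracking the quantum grading of these generators through the $r$ Seifert circles and the writhe $w(D)=n$ yields $s(K) = w(D) - r + 1 = n-r+1$. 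Combining (i) and (ii) gives $g_s(K) \ge (n-r+1)/2$, and together with the upper bound the equality follows.

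The main obstacle is the lower bound, and within it part (i): establishing that virtual Khovanov--Manturov homology is functorial enough under saddle/birth/death cobordisms to yield a genuine slice-genus bound. In the virtual theory the gradings and cobordism maps carry sign and orientation subtleties absent classically, and the precise normalization of $s(K)$ — hence whether the bound is exactly $s(K)/2$ or a shifted variant — must be pinned down before the value computed in (ii) can be matched to $(n-r+1)/2$. This is precisely why the cited argument routes through the Rasmussen invariant rather than through elementary cobordism reasoning; the upper bound, by contrast, is entirely combinatorial and sits directly inside the Gauss-diagram framework developed in this paper.
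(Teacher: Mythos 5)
Your proposal is correct and follows essentially the same route as the proof behind this statement: the paper does not prove Theorem \ref{thm-DKK} itself but quotes it from \cite[Theorem 6.8]{Dye-Kaestner-Kauffman-2014}, noting explicitly that Dye, Kaestner, and Kauffman established it via the virtual Rasmussen invariant from Khovanov homology for virtual knots, which is exactly your lower-bound mechanism. Your upper bound (the $n$-saddle Seifert-state cobordism capped by $r-1$ deaths, giving genus $(n-r+1)/2$) and your lower bound (slice--Bennequin plus $s(K)=n-r+1$ for positive diagrams) are the two halves of that cited argument, so your reconstruction is essentially the same proof rather than an alternative one.
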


\noindent
\emph{Proof of Theorem \ref{g-s-bound}.}
First, we argue by induction that for any Gauss diagram with $n$ chords, its slice genus satisfies $g_s(K) \leq n/2.$ 
The statement is obviously true for Gauss diagrams with 1 or 2 chords. In the first case, the virtual knot is trivial, and in the second, it is equivalent to 2.1 or its mirror image, which one can easily verify has slice genus one. 

Suppose then that $D$ is a Gauss diagram with $n>2$ chords and represents the virtual knot $K$. If $D$ is nontrivial, then $D$ must contain a pair of intersecting chords, and performing the crossed saddle move gives a genus one cobordism from $D$ to a new Gauss diagram $D'$ with $n - 2$ chords. 
Let $K'$ be the virtual knot represented by $D'$. The induction hypothesis gives that $g_s(K') \leq (n-2)/2$. Since there is a genus one cobordism from $K$ to $K'$, it follows that $g_s(K') \leq n/2$.

This completes the proof of the first inequality, and since the second holds automatically when $n$ is odd, we assume $n$ is even and that $D$ contains both positive and negative chords.
The proof is by induction on $n$, and since any Gauss diagram $D$ with only two chords and of opposite signs represents the unknot, we can further assume that $n \geq 4$.

Let $D$ be a Gauss diagram with $k$ positive chords and $\ell$ negative chords, where $k,\ell >0$ satisfy $k+\ell=n$. Since the slice genus of a virtual knot is unchanged under mirror images, we can further assume without loss of generality that $k \leq \ell.$

\smallskip \noindent
{\bf Case 1:} The Gauss diagram $D$ contains a pair of intersecting chords of opposite sign and $k>1$. 
Using them, we can produce a genus one cobordism to a Gauss diagram $D'$. Since $k,\ell>1$, the new Gauss diagram $D'$ will contain both positive and negative chords and have $n'=n-2.$

\smallskip \noindent
{\bf Case 2:} The Gauss diagram $D$ contains a pair of intersecting negative chords and $\ell >2.$ Using them, we can produce a genus one cobordism to a Gauss diagram $D'$. Since $\ell > 2$, the new Gauss diagram $D'$ will contain both positive and negative chords and have $n'=n-2.$

\smallskip \noindent
{\bf Case 3:} The Gauss diagram does not contain a pair of intersecting chords of opposite sign, and $k=\ell=2.$ Then $n=k+\ell=4$, and the virtual knot $K$ is a connected sum of $2.1$ and its mirror image. There are several possibilities, and one can show directly that in each case $K$ has slice genus $g_s(K)\leq 1.$

\smallskip \noindent
{\bf Case 4:} The Gauss diagram $D$ does not contain a pair of intersecting negative chords and $k=1$.
If $D$ contains a negative chord that does not intersect the positive chord, then that chord can be eliminated by a $r1$ move, and so the virtual knot $K$ can be represented by a Gauss diagram with $n-1$ chords, and we have that $g_s(K) \leq (n-1)/2$ by the first inequality. Otherwise, the negative chords of $D$ must be parallel to each other and intersect the positive chord. Performing a chord deletion  along the positive chord, we obtain a genus one cobordism to the unknot, showing that $g_s(K) \leq 1\leq n-1$ as claimed. This completes the argument. 
\qed


\section{Concordance Invariants} \label{sec-concordance}

In this section we show that several invariants of virtual knots are also concordance invariants. First we give a topological proof that several index invariants are concordance invariants. Secondly, we show how Turaev's graded genus, which is a concordance invariant for knots in thickened surfaces, is also a concordance invariant of virtual knots. The polynomial invariants are useful as slice obstructions because they are easily computable. The graded genus is more difficult to compute, but it provides useful bounds on the slice genus. For instance, we apply the graded genus to show that every non-classical virtual knot with only positive crossings is not slice (Theorem \ref{gg-app}).

\subsection{Index polynomials} \label{ssec-writhe} 
Here we will consider just three variants: the odd writhe (see \cite{Kauffman-2004}), the Henrich-Turaev polynomial (see \cite{Henrich-2010, Turaev-2008a}), and the writhe polynomial (see \cite{Cheng-Gao}); many other variants can be found in  the literature. In \cite{Chrisman-Kaestner}, the HT-polynomial was shown to be a concordance invariant of virtual knots. In \cite{Rushworth-2017}, the odd writhe was shown to be a concordance invariant using virtual Khovanov homology. We will give a topological argument to show that the writhe polynomial is a concordance invariant. Since the writhe polynomial dominates the other two invariants, our result immediately implies that the odd writhe and Henrich-Turaev polynomial are also concordance invariants. We begin by reviewing the necessary definitions. 

\begin{figure}[htb]
\begin{tabular}{c} \\
\def\svgwidth{1.25in}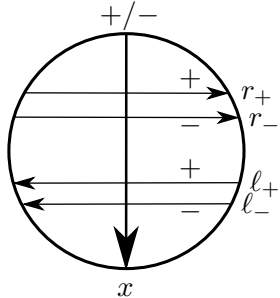 \end{tabular}
\caption{Definition of $\ind(x)$.} \label{fig_cheng_defn}
\end{figure}

Let $K$ be a virtual knot with Gauss diagram $D$, and let $C(D)$ the set of chords of $D$. For every $x\in C(D)$, we define the \emph{index of} $x$ as follows. Rotate $D$ so that the arrow $x$ points downwards. Let $r_+$ ($\ell_+$) denote the number of positive arrows crossing $x$ from left to right (respectively, from right to left). Similarly define $r_-$, $\ell_-$ for negative arrows crossing $x$. See Figure \ref{fig_cheng_defn}. Then the index of $x$ is given by:
\[
\ind(x)=r_+-r_- - \ell_+ +\ell_-.
\]
Denote by $\sign(x)$ the sign of crossing $x$, so that $\sign(x)=\pm 1$. Then the odd writhe $J(K)$, Henrich-Turaev polynomial $P_K(t)$, and writhe polynomial $ W_K(t)$  are defined by setting  
\begin{eqnarray} \label{writhe-1}
J(K) &=&\sum_{\ind(x) \text{ odd}} \text{sign}(x),\\ W_K(t) &=& \sum_{\ind(x) \ne 0} \sign(x) \, t^{\ind(x)}, \, \, \text{and} \\ P_K(t)&=&\sum_{\ind(x) \ne 0} \sign(x) \, t^{|\ind(x)|}.
\end{eqnarray}
The sum in each of the above is taken over all $x \in C(D)$ satisfying the stated condition.\footnote{Note that a slightly different normalization is used here for $W_K(t)$ than in \cite{Cheng-Gao}.} 
\begin{example} 
Figure \ref{fig_wk_example} shows the Gauss diagram of a virtual knot $K$ with $W_K(t)=t^2-2t+2t^{-1}-t^{-2}$, $P_K(t)=0$, and $J(K)=0$. This shows that the writhe polynomial is a stronger concordance invariant than both the odd writhe and the Heinrich-Turaev polynomial.
\end{example} 

\begin{figure}[htb]
\def\svgwidth{1.25in}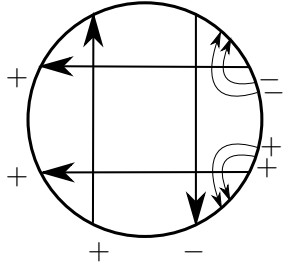
\caption{A virtual knot with trivial $P_K(t)$ but non-trivial $W_K(t)$.} \label{fig_wk_example}
\end{figure}

The index of a crossing can be described topologically using knots in thickened surfaces. Suppose $\fK$ is a knot in $\Si \times I$ and is represented by a diagram $\fD$ on $\Si$. Let $C(\fD)$ denote the set of crossings of $\fD$. If $x \in C(\fD)$, let $\fD_x$ denote the \emph{right half} (or \emph{distinguished half}) of the diagram obtained by performing the oriented smoothing of $\fD$ at $x$ (see Figure \ref{fig_right_half}); it is the component on the right of the crossing when both arcs are oriented upwards. We use $\fD_x \cdot \fD$ to denote the algebraic intersection number of the homology classes
$[\fD_x], [\fD] \in H_1(\Si)$.

\begin{figure}[htb]
\begin{tabular}{c} \\
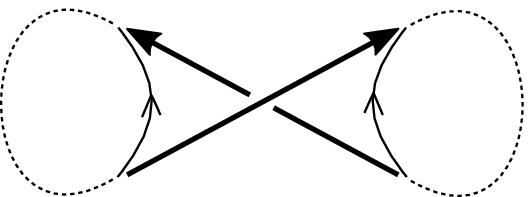
\qquad 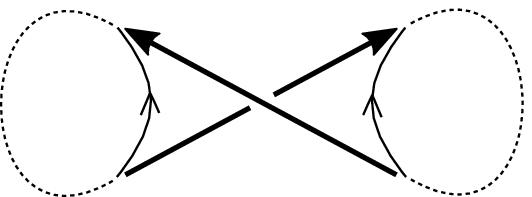
\end{tabular}
\caption{Definition of right half of a crossing.} \label{fig_right_half}
\end{figure}

Then the writhe polynomial and Henrich-Turaev polynomial are defined for knots in thickened surface by setting:
\begin{equation} \label{writhe-2}
W_{\fK}(t)=\sum_{\fD_x \cdot \fD \ne 0} \sign(x) \, t^{\sign(x) (\fD_x \cdot \fD)} \quad \text{ and } \quad P_{\fK}(t)=\sum_{\fD_x \cdot \fD \ne 0} \sign(x) \, t^{|\fD_x \cdot \fD|}.
\end{equation} 

\begin{figure}[htb]
\begin{tabular}{cccc} \\
\def\svgwidth{1.5in}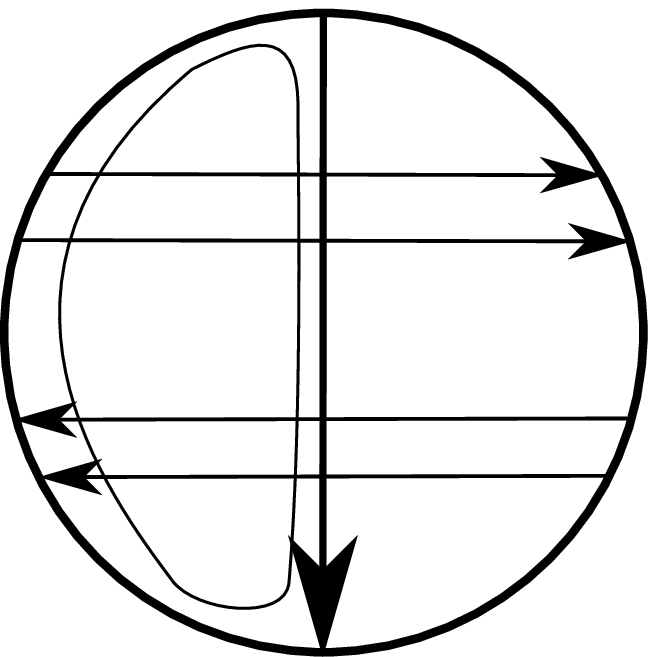 & & & \def\svgwidth{1.5in}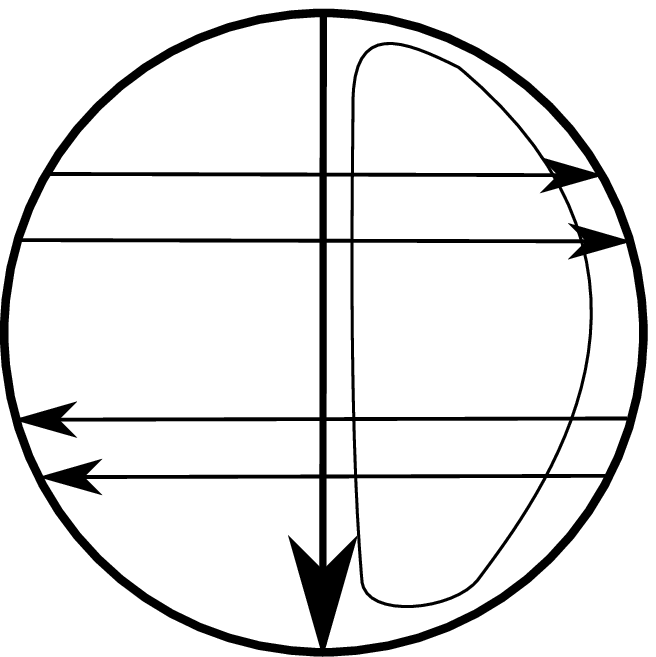\end{tabular}
\caption{Relating $\ind(x)$ and $\fD_x \cdot \fD$.} \label{fig_cheng_is_mod}
\end{figure}

\begin{lemma} \label{thm_W_eq_C} 
Suppose $K$ is a virtual knot, $\fK$ a knot in $\Si \times I$ representing $K$, and $\fD$ is a diagram for $\fK$. For every crossing $x$ of $\fD,$ we have
$\ind(x)=\sign(x) \, (\fD_x \cdot \fD)$.
\end{lemma}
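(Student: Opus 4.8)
The plan is to interpret both sides of the identity geometrically on the surface $\Si$ and then match them chord-by-chord on the Gauss diagram. First I would perform the oriented smoothing of $\fD$ at the single crossing $x$. Since $\fD$ is a connected knot diagram, this smoothing splits it into exactly two oriented closed curves: the right half $\fD_x$, which I will call $R$, and the complementary left half, which I will call $L$. Because the smoothing respects orientation, $[\fD]=[R]+[L]$ in $H_1(\Si)$. As the intersection pairing on $H_1$ of an oriented surface is skew-symmetric, $[R]\cdot[R]=0$, and hence
\[
\fD_x\cdot\fD=[R]\cdot([R]+[L])=[R]\cdot[L].
\]

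Next I would evaluate $[R]\cdot[L]$ as an honest signed count of transverse intersection points, using $R$ and $L$ themselves as cycle representatives. As subsets of $\Si$, these two curves meet each other exactly at those crossings $y\neq x$ of $\fD$ whose two strands are split between $R$ and $L$; at a crossing where both strands lie in one component, the point is a self-intersection of $R$ or of $L$ and contributes nothing to $[R]\cdot[L]$ (and virtual crossings never occur, since $\fD$ is an honest diagram on $\Si$). Reading this off the Gauss diagram, a crossing $y$ has its strands split between $R$ and $L$ precisely when the chord $y$ is linked with the chord $x$, that is, exactly when $y$ is one of the arrows counted by $r_\pm,\ell_\pm$ in the definition of $\ind(x)$. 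Regrouping that definition as $\ind(x)=\sum_y \sign(y)\,\epsilon(y)$, where $\epsilon(y)=+1$ or $-1$ according as the arrow $y$ crosses $x$ from left to right or from right to left, shows that both $\ind(x)$ and $\sign(x)\,(\fD_x\cdot\fD)$ are sums over the same set of chords $y$. It then remains only to compare the contribution of a single such $y$ to each side.

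The heart of the argument, and the step I expect to be the main obstacle, is this local sign comparison at a fixed linked crossing $y$. The chord $y$ contributes $\sign(y)\,\epsilon(y)$ to $\ind(x)$, while it contributes the local intersection sign of the oriented curves $R$ and $L$ to $[R]\cdot[L]$; the latter sign is insensitive to the over/under data at $y$ but is sensitive to which of $R,L$ carries the over-strand. The subtlety is that the identification of the right component $R$ with a definite arc of the core circle of the Gauss diagram itself depends on $\sign(x)$: with both strands at $x$ oriented upward, the right outgoing arc continues along the over-strand for a positive crossing and along the under-strand for a negative crossing (this is precisely the content of Figure \ref{fig_right_half}). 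Carefully tracking orientations through the oriented smoothing, I would verify in each local configuration --- the sign of $x$, the sign of $y$, whether the over-arc of $y$ lies in $R$ or in $L$, and the left/right direction of the arrow $y$ --- that the local intersection sign equals $\sign(x)\,\sign(y)\,\epsilon(y)$. Summing over all linked chords $y$ then gives $\sign(x)\,(\fD_x\cdot\fD)=\ind(x)$; the finitely many cases can be organized compactly using the two pictures of Figure \ref{fig_cheng_is_mod}, where the two signs of the crossing $x$ are displayed side by side.
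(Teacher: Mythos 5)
Your argument is correct and follows essentially the same route as the paper's proof: both reduce $\fD_x\cdot\fD$ to a signed count of local intersections at the crossings linked with $x$ and then match that count against $r_+-r_--\ell_++\ell_-$, with the sign of $x$ entering through which side of the arrow the distinguished half $\fD_x$ occupies. Your explicit use of $[\fD]=[R]+[L]$ and $[R]\cdot[R]=0$ is a slightly more careful bookkeeping of a step the paper leaves implicit, but the substance of the case analysis is the same.
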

\begin{proof} If $x$ is a positive crossing, then  the distinguished half $\fD_x$ is as depicted  in Figure \ref{fig_right_half} (left), in particular it lies to the left of the arrow of $x$. 
If $x$ is a negative crossing, then the  distinguished half $\fD_x$ lies to the right of the arrow (see Figure \ref{fig_right_half} (right)). In either case $\fD_x \cdot \fD$ is the signed sum of local intersections of $\fD_x$ with $\fD$. When $\sign(x)=1$, this signed sum coincides with $r_+-r_--(\ell_+-\ell_-)=\ind(x)$.  
When $\sign(x)=-1$, it equals $\ell_+-\ell_--(r_+ -r_-)=-\ind(x)$. Thus $\ind(x) = \sign(x) (\fD_x \cdot \fD)$ as claimed, and that completes the proof of the lemma.
\end{proof}

Comparing equations \eqref{writhe-1} and \eqref{writhe-2}, Lemma \ref{thm_W_eq_C} shows that $W_K(t)= W_\fK(t)$ and $P_K(t)= P_\fK(t)$. Similarly, the odd writhe of a virtual knot can be computed from any representative in a thickened surface.

\begin{theorem} \label{thm_writhe_poly} 
$J(K)$, $W_K(t)$, and $P_K(t)$ are concordance invariant of virtual knots.
\end{theorem}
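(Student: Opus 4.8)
The plan is to reduce everything to the writhe polynomial and then argue topologically in a thickened surface. First I would record the domination explicitly. Writing $w_n$ for the coefficient of $t^n$ in $W_K(t)=\sum_{\ind(x)\neq 0}\sign(x)\,t^{\ind(x)}$, the odd writhe is the sum of the odd-index coefficients, $J(K)=\sum_{n\ \mathrm{odd}}w_n$, while the Henrich--Turaev polynomial is the symmetrization $P_K(t)=\sum_{n\geq 1}(w_n+w_{-n})\,t^n$. Hence both $J(K)$ and $P_K(t)$ are determined by $W_K(t)$, and it suffices to prove that $W_K(t)$ is a concordance invariant. By Lemma \ref{thm_W_eq_C} and the remark following it, $W_K(t)=W_\fK(t)$ for any representative $\fK\subset\Si\times I$, with $\ind(x)=\sign(x)\,(\fD_x\cdot\fD)$ computed in $H_1(\Si)$. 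So I would restate the goal as: $W_\fK(t)$ depends only on the concordance class of $\fK$.

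Next I would set up the concordance as in \S\ref{cob-conc}: an oriented $3$-manifold $M$ with $\partial M\cong-\Si_0\sqcup\Si_1$ together with an embedded annulus $A\subset M\times I$ with $\partial A=-\fK_0\sqcup\fK_1$. I would put $A$ in general position for a projection $M\times I\to M$ (equivalently, present the concordance as a generic movie of surface diagrams built from isotopies, Reidemeister moves, births, deaths, and saddles). The key object is the double-point set of the projected annulus: a properly embedded compact $1$-manifold $\Delta\subset A$ whose boundary consists of exactly the classical crossings of the two end diagrams $\fD_0$ and $\fD_1$. Each point of $\Delta$ carries a crossing sign, and this sign is locally constant along $\Delta$.

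The heart of the argument is to attach to each point of $\Delta$ its homological index and show this index is locally constant. I would do this by pushing the distinguished-half loop $\fD_x$ along a double-point arc and measuring its algebraic intersection with $\fK$ inside $M\times I$, so that $\fD_x\cdot\fD$ is realized as an intersection number of cycles that deform continuously along $\Delta$. Interior endpoints of $\Delta$ (branch points of the projection, i.e.\ Reidemeister-$1$ kinks) carry index zero, so for $n\neq 0$ the index-$n$ stratum $\Delta_n$ has no interior endpoints: it is a compact oriented $1$-manifold cobounding the index-$n$ crossings of $\fD_0$ and those of $\fD_1$. With the orientation convention matching $\partial A=-\fK_0\sqcup\fK_1$, the crossing sign agrees with the boundary orientation of $\Delta_n$, and the vanishing of the signed boundary of a compact oriented $1$-manifold gives $w_n(\fK_0)=w_n(\fK_1)$ for every $n\neq 0$. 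Summing yields $W_{\fK_0}(t)=W_{\fK_1}(t)$. Concretely, an arc of $\Delta_n$ running from one end to the other contributes the same monomial $\sign(x)\,t^n$ to both polynomials, while an arc with both endpoints on a single end joins two crossings of equal index and opposite sign (the Reidemeister-$2$ pattern of Figure \ref{GD-RM2}), whose contributions cancel.

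I expect the main obstacle to be precisely the local constancy of the index along $\Delta$. The pairing $\fD_x\cdot\fD$ is defined in $H_1(\Si)$, but across the cobordism the surface deforms from $\Si_0$ to $\Si_1$ through $M$, and at each saddle the number of link components and the class $[\fD]$ change, so a priori the index of a crossing could jump. The delicate step is therefore to reformulate $\fD_x\cdot\fD$ as an intersection pairing that extends over $M\times I$ and is insensitive both to stable equivalence (addition of handles disjoint from the diagram) and to passage through the multi-component intermediate stages of an annular cobordism. Once the index is shown to be genuinely a bordism-level intersection number, constancy along $\Delta$ and hence the concordance-invariance of each coefficient $w_n$, and of $W_\fK(t)$, $P_K(t)$, and $J(K)$, follow.
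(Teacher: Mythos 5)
Your proposal follows essentially the same route as the paper's proof: reduce $J(K)$ and $P_K(t)$ to $W_K(t)$, pass to knots in thickened surfaces via Lemma \ref{thm_W_eq_C}, and analyze the double-point set of the projected concordance surface, which is precisely the content of the paper's Lemma \ref{lemma_turaev_invol} (the paper first reduces to the slice-disk case using additivity of $W_K$ under connected sum and $W_{-K}=-W_K$, whereas you run the double-point argument directly on the annulus, but the geometric core is identical, and both accounts leave the same step --- the behavior of the index along the double-point arcs --- at the level of a sketch deferred to Turaev). One wording correction: the crossing sign is not literally locally constant along $\Delta$; for an arc with both endpoints on the same end diagram, Lemma \ref{lemma_turaev_invol}(ii) gives $\sign(x)=-\sign(y)$ and $\fD_x\cdot\fD=-\fD_y\cdot\fD$ (hence equal $\ind$ and opposite sign, which is the cancellation your argument actually uses), so the invariant quantity along the arc is the one whose boundary values are crossing sign times boundary orientation, exactly as in your subsequent sentence.
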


The following lemma is helpful in proving  Theorem \ref{thm_writhe_poly} (cf.~\cite[Lemma 2.3.2]{Turaev-2008a}).

\begin{lemma}\label{lemma_turaev_invol} Suppose $\fK$ is a knot in $\Si \times I$ with diagram $\fD$. If $\fK$ is slice, then there is an involution $\nu$ on the set $C(\fD)$ such that, for all $x,y \in C(\fD)$:
\begin{enumerate}
\item[(i)] If $\nu(x)=x$, then $\fD_x\cdot\fD=0$.
\item[(ii)]  If $\nu(x)=y$ and $x\neq y$, then $\sign(x)=-\sign(y)$ and $\fD_x \cdot \fD = -\fD_y \cdot \fD$.
\end{enumerate}
\end{lemma}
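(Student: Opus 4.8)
The plan is to extract the involution $\nu$ from a geometric realization of the slicing of $\fK$, and to read off (i) and (ii) from the way classical crossings are created and destroyed along it. By the definitions in \S\ref{cob-conc}, sliceness means $\fK$ is concordant to the unknot; capping off the unknotted end of the concordance annulus produces a compact oriented $3$-manifold $M$ with $\partial M \cong \Si$ together with a properly embedded disk $\Delta \subset M \times I$ having $\partial \Delta = \fK$. Throughout I would use the homological description $\fD_x \cdot \fD = \langle [\fD_x],[\fK]\rangle$ in $H_1(\Si)$ and keep Lemma \ref{thm_W_eq_C} in reserve, since it converts the sign-and-index bookkeeping demanded by (i)--(ii) into statements about $\sign(x)$ and $\fD_x \cdot \fD$ alone.

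First I would put $\Delta$ in generic position with respect to the radial coordinate on $M \times I$, producing a movie of diagrams on $\Si$ that starts at $\fD$ and ends at the empty diagram, whose elementary transitions are births, deaths, saddles, and the moves $r1$--$r3$ and $v1$--$v4$. The key observation is that a classical crossing can only be \emph{created or annihilated} by an $r1$ or an $r2$ move: births and deaths involve crossingless circles, saddles split or merge core circles without deleting chords, the virtual moves touch no classical crossing, and $r3$ merely permutes the existing crossings while preserving their signs and indices. An $r1$ move removes a single kink, whose oriented smoothing half is a null-homotopic loop; an $r2$ move removes a pair of crossings of opposite sign. I would therefore define $\nu$ by tracing each chord of $\fD$ forward through the movie to its moment of annihilation: a crossing removed by an $r2$ move is paired with its $r2$-partner, and a crossing removed by an $r1$ move is declared a fixed point.

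Granting that this recipe produces an honest involution on $C(\fD)$, the two properties follow cleanly. For (ii), paired crossings $x$ and $y$ are destroyed by a single $r2$ move, so $\sign(x)=-\sign(y)$; moreover the two crossings involved in an $r2$ move always have equal index, which is precisely the local reason the writhe polynomial is $r2$-invariant. Hence by Lemma \ref{thm_W_eq_C} we have $\sign(x)\,(\fD_x\cdot\fD) = \ind(x) = \ind(y) = \sign(y)\,(\fD_y\cdot\fD)$, and since $\sign(x)=-\sign(y)$ this forces $\fD_x\cdot\fD = -\fD_y\cdot\fD$, giving (ii). For (i), a fixed crossing $x$ is removed by an $r1$ move, so $[\fD_x]=0$ in $H_1(\Si)$ and $\fD_x\cdot\fD=0$. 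Feeding these back into the definitions of $W_K(t)$, $P_K(t)$, and $J(K)$ is exactly the pairwise cancellation needed to prove Theorem \ref{thm_writhe_poly}: a fixed crossing contributes a term of degree zero that is absorbed by the vanishing-index condition, while a pair $x,y$ contributes $\sign(x)t^{\ind(x)}+\sign(y)t^{\ind(y)}=0$.

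The main obstacle is hidden in the phrase ``honest involution on $C(\fD)$.'' Naive forward-tracing records a pairing of all annihilation \emph{events} in the movie, but an original chord of $\fD$ could in principle be annihilated by an $r2$ move against a crossing that was itself \emph{born} later in the movie, so the event-pairing need not restrict to an involution of the original set $C(\fD)$. Controlling these mid-movie creations and deaths is the real work, and it is where I expect the argument to require genuine care rather than bookkeeping. To handle it I would pass to the level of Gauss diagrams, recasting the slicing in the saddle/birth/death calculus of \S\ref{sec-vkc} and using the homological invariance of $\fD_x\cdot\fD$ to show that the multiset of pairs $(\sign(x),\fD_x\cdot\fD)$ over surviving chords remains balanced at every stage, so that the desired pairing on $C(\fD)$ can be assembled from this invariant. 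This is exactly the strategy of Turaev \cite[Lemma 2.3.2]{Turaev-2008a}, whose algebraic treatment of the pairing I would adapt to the present Gauss-diagram setting.
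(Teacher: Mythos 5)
Your construction of $\nu$ does not match the paper's, and the place where you yourself locate ``the real work'' is exactly where the argument is left unfinished. The paper (following Turaev's Lemma 2.3.2) does not run a movie at all: it takes the slice disk $D\subset M\times I$ with $\partial D=\fK$, composes with the projection $M\times I\to M$ to get a generic map $f\co D\to M$, and looks at the double-point set $\{x\in D: |f^{-1}(f(x))|\ge 2\}$, a union of circles and arcs in $D$. Each arc has two endpoints, and each endpoint is either a preimage of a crossing of $\fD$ or a branch point of $f$; the involution is read off directly from the arcs ($\nu(x)=x$ when the arc from $x$ runs into a branch point, $\nu(x)=y$ when it runs into another crossing $y$), and (i)--(ii) follow from analyzing the right-halves at the two ends of a single arc. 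Because the arcs are intrinsic to the projection, there is no analogue of your ``crossing annihilated against a crossing born later'' problem, and you genuinely get an involution on $C(\fD)$ with no further repair. Your closing claim that the movie-tracing scheme ``is exactly the strategy of Turaev'' is therefore not accurate.

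The gap in your version is concrete. Your event-pairing is, as you admit, not an involution on $C(\fD)$, and the proposed fix --- that ``the multiset of pairs $(\sign(x),\fD_x\cdot\fD)$ over surviving chords remains balanced at every stage'' --- is asserted rather than proved, and is problematic at precisely the moves your survey of elementary transitions skips over. Between saddle moves the movie passes through multi-component links, where the oriented smoothing at a crossing between two different components does not split the diagram into two halves, so $\fD_x$ and hence your invariant are not even defined at every stage without additional conventions; and when a saddle merges or splits components, the homology classes of the right-halves of the surviving crossings can change, so ``saddles \dots{} preserve the multiset'' needs an argument, not a parenthetical. (Your $r1$ and $r2$ bookkeeping is fine: an $r1$ crossing has $\fD_x\cdot\fD=0$ since $[\fD]\cdot[\fD]=0$, and an $r2$ pair contributes $(\epsilon,v)$ and $(-\epsilon,-v)$, which is consistent with (ii). It is only the saddle/link stages that are unaddressed, and they are unavoidable in a concordance.) Note also that the balance condition you are trying to propagate is essentially equivalent to the conclusion $W_{\fK}(t)=0$ of Theorem \ref{thm_writhe_poly}, so a proof along your lines must supply the saddle analysis in full or it proves nothing the lemma is meant to deliver. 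The generic-projection argument avoids all of this, which is why the paper uses it.
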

\begin{proof} The proof is sketched;  it is similar to the proof of \cite[Lemma 2.3.2]{Turaev-2008a}. Since $\fK$ is slice, we have a 3-manifold $M$ with $\partial M = \Si$ and 2-disk $D \subset M \times I$ with $\partial M = \fK.$ Let $f\colon D \hookrightarrow M \times I \to M$ be the composition of inclusion and projection, and we may assume that $f$ is a generic map, namely that it is an immersion outside a finite number of branch points. The set of points $x\in D$ satisfying $|f^{-1}(f(x))|\ge 2$ consists of embedded circles and intervals in $D$ whose intersections correspond to the triple-points of $f$. The endpoints of the immersed intervals lie either in the set $C(\fD)$ or in the set of branched points. If $x \in C(\fD)$ connects to a branch point, define $\nu(x)=x$. Otherwise, $x$ connects to exactly one other point $y \in C(\fD)$ such that $y \ne x$. In that case, we set $\nu(x)=y$. The claim then follows from a detailed analysis of the right-halves $\fD_x$ and $\fD_y$ in each case.
\end{proof}

\begin{proof}[Proof of Theorem \ref{thm_writhe_poly}] Consider the case of $W_K(t)$. Since the writhe polynomial is additive under connected sum and satisfies $W_{-K}(t)=-W_{K}(t)$ (cf.~\cite[Proposition 3.4]{Cheng-Gao}), it is enough to show that $W_{K}(t)=0$ whenever $K$ is a slice knot. By Lemma \ref{thm_W_eq_C} and the equivalence of the two notions of concordance, it is sufficient to prove the corresponding statement for knots in thickened surfaces. Suppose then that $\fK$ is a slice knot in $\Si \times I$. Then there is an oriented $3$-manifold $M$ with $\partial M = \Si$ and a 2-disk $D$ in $M \times I$ such that $\partial D=\fK$. We apply Lemma \ref{lemma_turaev_invol} and analyze the orbits of the involution $\nu$.

Part (i) of Lemma \ref{lemma_turaev_invol} implies that crossings $x$ fixed by $\nu$ do not contribute to $W_{\fK}(t)$. Suppose then that $x,y$ are distinct crossings with $\nu(x)=y$. Part (ii) of Lemma \ref{lemma_turaev_invol} then shows that
\[
\sign(x)(\fD_x\cdot\fD)=(-\sign(y))(-\fD_y\cdot\fD)=\sign(y)(\fD_y\cdot\fD)
\]
Since $\sign(x)=-\sign(y)$ and their contributions to $W_{\fK}(t)$ have the same exponent, it follows that their total contribution to $W_{\fK}(t)$ is zero. This implies that $W_{\fK}(t) =0$. The proofs for $P_K(t)$ and $J(K)$ follow similarly. This completes the proof. 
\end{proof}

\subsection{Turaev's Graded Genus} \label{ssec-gg}
For the purposes of determining sliceness and computing the slice genus of virtual knots, the most powerful invariant is Turaev's graded genus of knots in thickened surfaces.  Here we provide a practical guide to Turaev's graded genus concordance invariant. There are three steps needed to find the graded genus of a knot in $\Si \times I$ efficiently: calculating the \emph{graded matrix}, determining the \emph{simple graded fillings}, and lastly computing the graded genus itself.

\subsection*{Graded Matrices} Let $\fK$ be a knot in $\Si \times I$ represented by a diagram $\fD$ on $\Si$. Let $C=C(\fD)$ be the set of crossings of $\fD$, and set $C_+$ to be the subset of $C$ of positive crossings and $C_-$ the negative crossings. Consider the augmented set $\wh{C} = C \cup \{s\}$. If $x \in C$, let $\fD_x$ be as before the \emph{right half} of the knot diagram of the oriented smoothing of $fD$ at $x$ (see Figure \ref{fig_right_half}). For the special element $s \in \wh{C}$, set 
$\fD_s = \fD.$
Define $\be \co \wh{C} \times \wh{C} \to \ZZ$ by setting
$$\be(x,y)= [\fD_x] \cdot [\fD_y],$$
where $[\fD_x], [\fD_y] \in H_1(\Si)$ denote homology classes and $\cdot$ is the intersection pairing on $\Si$.

The \emph{graded matrix} of $\fD$ is the triple $T=(\wh{C},s,\be)$. Note that $\be$ is skew-symmetric. The map $\be$ can be computed directly from a Gauss diagram of $\fD$. This will be explained in the next subsection.

\subsection*{Simple Graded Fillings} Let $\ZZ[\wh{C}]$ be the free $\ZZ$-module generated by the set $\wh{C}$. A \emph{simple graded filling} is a finite subset $\la$ of $\ZZ[\wh{C}]$ satisfying the following three properties:
\begin{enumerate}
\item $s \in \la$,
\item $\displaystyle{\sum_{x \in \la} x=\sum_{y \in \wh{C}} y}$, and
\item if $x \in \la$ and $x \not\in \wh{C}$, then $x=g+h$ for $g,h \in C$ of opposite sign.
\end{enumerate} 

\begin{example} Suppose that $C_+=\{g_1,g_2\}$ and $C_-=\{g_3\}$. Then the only simple graded fillings are $\{s,g_1,g_2,g_3\}$, $\{s,g_1+g_3,g_2\}$, $\{s,g_1,g_2+g_3\}$.
\end{example}

Next extend $\be$ to a skew-symmetric bilinear form on $\ZZ[\wh{C}]$ by linearity. The \emph{matrix of a simple graded filling} $\la=\{\la_1,\ldots,\la_m\}$ is the $m\times m$ matrix $B_{\la}$ whose $(i,j)$-entry is $\be(\la_i,\la_j)$. The set $\wh{C}$ is itself a simple filling. The matrix of $\wh{C}$ is just the matrix associated to the form $\be$.
 
\subsection*{Computing $\vartheta(T)$} For a simple graded filling $\la$, define $\vartheta(\la)=\frac{1}{2} \cdot \text{rank}_{\ZZ}(B_{\la})$. The \emph{graded genus} $\vartheta(T)$ is then defined to be:
\[
\vartheta(T)=\min\{\vartheta(\la): \la \text{ is a simple graded filling}\}.
\]
Given a sequence of graded matrices $T_1,\ldots, T_p$, one may analogously define $\vartheta(T_1,\ldots, T_p)$. First define a form by $\be=\bigoplus_{i=1}^p \be_i \co \ZZ[\wh{C}] \times \ZZ[\wh{C}] \to \ZZ$ where $\wh{C}=\bigsqcup_{i=1}^p \wh{C}_i$ and ${C}=\bigsqcup_{i=1}^p {C}_i$. (Thus $C = \wh{C} \sm \{s_1 \ldots, s_p\}.$) Similarly define $C_+$ ($C_-$) to be the disjoint union of all the $+$ (respectively, $-$) crossings. Next simple grading fillings are generalized to \emph{graded fillings}. Let $S$ be the submodule of $\ZZ[\wh{C}]$ generated by $s_1,\ldots, s_p$. Then a finite subset of $\ZZ[\wh{C}]$ is called a graded filling if it satisfies all of the following:
\begin{enumerate}
\item $s_1+\cdots+s_p \in \la$,
\item $\displaystyle{\sum_{x \in \la} x \equiv \sum_{g \in \wh{C}} g \pmod{S}}$, and
\item if $x \in \la$, then $x \in S$, or $x \equiv g \pmod{S}$ for some $g \in C$, or $x \equiv g+h \pmod{S}$ for some $g,h \in C$ of opposite crossing sign.
\end{enumerate} 
As before $\vartheta(T_1,\ldots,T_p)$ is the minimum $\vartheta(\la)$ over all graded fillings $\la$.

Given a graded matrix $T=(\wh{C},s,\be)$, let $-T=(-\wh{C},s,-\be)$, where $-\wh{C}$ is determined by $-C_+:=C_-$ and $-C_-:=C_+$.

\subsection{Graded Genus of a Virtual Knot Diagram} 
In this subsection, the graded genus is defined  for virtual knots in terms of their Gauss diagrams. We adopt parallel notation $(\wh{C},s,\be)$ as above for the graded matrix of virtual knots.

Let $D$ be the Gauss diagram of a virtual knot having $n$ arrows. Let $\wh{C} = C \cup \{s\}$ be the set of $n+1$ elements consisting of the arrows of $D$ together with an element $s$. Let $C_+,C_- \subseteq C$ be the subsets of positive and negative arrows of $D$. Assign a base point $*$ to $D$; its position does not affect the graded genus. Moving counterclockwise, number the arrows of $D$ in the same way as one determines the Gauss code.  The matrix of the map $\be$ will be an $(n+1) \times (n+1)$ matrix indexed by $0, 1, \ldots, n$. The $0$ index corresponds to $s$ while the indices $1,\ldots,n$ correspond to the arrow numbers.

To define $\be \co \wh{C} \times \wh{C} \to \ZZ$, it is convenient to work with flat knots. 
A flat Gauss diagram is a trivalent graph with one core circle and (unsigned) directed chords.
For a Gauss diagram $D$, we use $\wbar{D}$ to denote the associated flat Gauss diagram; it is obtained from $D$ by changing the direction of all the negative arrows of $D$ and erasing all the signs. 
We define $\be(x,y)$ following Henrich \cite{Henrich-2010}, who used it to compute the \emph{based matrix}.

\begin{figure}[htb]
\begin{tabular}{cccc}
\begin{tabular}{c} 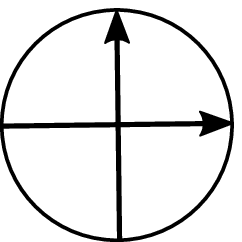 \\ {$\varepsilon=1$} \end{tabular} & 
\begin{tabular}{c} 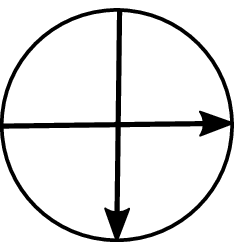 \\ {$\varepsilon=-1$} \end{tabular}  & 
\begin{tabular}{c} 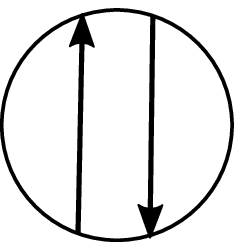 \\ {$\varepsilon=0$}\end{tabular} & \begin{tabular}{c} 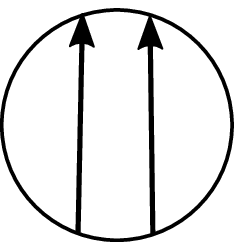 \\ {$\varepsilon=0$} \end{tabular}
\end{tabular}
\caption{Definition of $\varepsilon$.} \label{fig_err}
\end{figure}

If $x\in C$, write $x=(a,b)$ where $a$ is the point on the circle of $\wbar{D}$ meeting the tail of $x$ and $b$ is the point meeting the arrowhead of $x$. Let $(ab)^{\circ}$ denote the interior of the arc in $\wbar{D}$ from $a$ to $b$ (in the counterclockwise direction). Given $x=(a,b)$ and $y=(c,d)$, let $(ab)^{\circ} \to (cd)^{\circ}$ denote the set of arrows beginning in $(ab)^{\circ}$ and ending in $(cd)^{\circ}$. Define:
\[
ab \cdot cd=\#[(ab)^{\circ} \to (cd)^{\circ}]-\#[(cd)^{\circ} \to (ab)^{\circ}].
\]
Here $\#(X)$ denotes the cardinality of $X$. The definition of $\be$ for $x$ and $y$ requires a further error term $\varepsilon$, defined by setting $\varepsilon=-1,0,1$ according to Figure \ref{fig_err}.\footnote{The correction term $\varepsilon$ equals minus the linking number of  $\{a,b\}$ and $\{c,d\}$, viewed as two $S^0$'s in $S^1$, the core circle of $\wbar{D}$.} Define $\be(x,y)=ab\cdot cd+\varepsilon$. If $y=s$, we define $\be(x,s)=\ind(x)=-\be(s,x)$. Lastly set $\be(z,z)=0$ for all $z \in \wh{C}$. With these rules in place, $\be(x,y)=-\be(y,x)$ for all $x,y \in \wh{C}$. Thus, $\be$ is skew-symmetric. 

\bigskip
\noindent
The \emph{graded matrix} is the triple $T=(\wh{C},s,\be)$. Simple graded fillings, graded fillings, and the graded genus $\vartheta(T)$ are defined exactly as in the case of knots in thickened surfaces. The following result relates the two notions of graded matrix. We leave the proof as an exercise.

\begin{proposition} \label{prop_same} Let $\fK$ be a knot in $\Si \times I$ with diagram $\fD$.
Let $D$ be the Gauss diagram associated to $\fD$.  Then the graded matrix of $D$ coincides with the graded matrix of $\fD$.
\end{proposition}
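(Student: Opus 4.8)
The plan is to show that the two triples $(\wh{C},s,\be)$ coincide entry by entry. Both constructions use literally the same index set $\wh{C}=C\cup\{s\}$ and the same distinguished element $s$, since the crossings of $\fD$ correspond bijectively to the arrows of the Gauss diagram $D$; hence the entire content is the equality of the two skew forms $\be$. Because both forms are skew-symmetric and vanish on the diagonal, it suffices to verify agreement on two types of entries: the mixed entries $\be(x,s)$ with $x\in C$, and the entries $\be(x,y)$ with $x,y\in C$. I would dispatch the mixed entries first: on the surface side $\be(x,s)=[\fD_x]\cdot[\fD_s]=[\fD_x]\cdot[\fD]$, and this equals $\ind(x)$, which is essentially the content of Lemma \ref{thm_W_eq_C}. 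The factor $\sign(x)$ appearing there is absorbed by the flat-diagram convention, which reverses the negative arrows in $\wbar{D}$ and hence replaces $\fD_x$ by its complementary half, flipping the sign of the pairing with $[\fD]$ exactly when $\sign(x)=-1$.

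The heart of the argument is the equality $[\fD_x]\cdot[\fD_y]=ab\cdot cd+\varepsilon$ for $x=(a,b)$ and $y=(c,d)$ in $C$. I would realize $\fD$ on a surface $\Si$ via the band (Carter) surface associated to $D$ — a disk along the core circle together with a twisted band at each arrow — so that each half-curve $\fD_x$ is carried by the arc $(ab)^{\circ}$ of the core, closed up by a short arc across the smoothing site at $x$. Placing $\fD_x$ and $\fD_y$ in general position on $\Si$ and counting signed intersections, the intersection points fall into two families. First, every arrow $z$ of $D$ with exactly one endpoint in the interior arc $(ab)^{\circ}$ and the other in $(cd)^{\circ}$ forces a transverse intersection of the two half-curves inside the corresponding band, with sign dictated by the direction of $z$; summing these signed contributions yields precisely $\#[(ab)^{\circ}\to(cd)^{\circ}]-\#[(cd)^{\circ}\to(ab)^{\circ}]=ab\cdot cd$. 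Since these intersection numbers are unchanged under stabilization of $\Si$ away from the knot, the count is intrinsic and independent of the chosen surface.

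The remaining contribution comes from the endpoints $a,b,c,d$ near the two smoothing sites, and this is where the correction term $\varepsilon$ enters; I expect this to be the main obstacle. One must check, in each of the configurations of Figure \ref{fig_err}, that the local intersection of the closing arcs of $\fD_x$ and $\fD_y$ contributes exactly $\varepsilon\in\{-1,0,1\}$, equal to minus the linking number of the two zero-spheres $\{a,b\}$ and $\{c,d\}$ on the core circle, as noted in the footnote. The delicate point is to carry out this bookkeeping consistently for both signs of the crossings, since the flat-diagram convention swaps the tail and head of negative arrows and thus interchanges the roles of the two halves; this is exactly what reconciles the orientation signs so that the surface intersection pairing reproduces Henrich's combinatorial formula. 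Once the two families of contributions are matched, $\be(x,y)=ab\cdot cd+\varepsilon$ follows, and together with the mixed entries this completes the identification of the two graded matrices.
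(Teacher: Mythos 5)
The paper does not actually supply a proof of Proposition \ref{prop_same} --- it is explicitly left as an exercise --- so there is no argument of record to compare yours against. Your reduction is the natural one and, in outline, correct: the index sets and distinguished elements match tautologically, so everything rests on the entries of $\be$, and your decomposition of $[\fD_x]\cdot[\fD_y]$ into band-crossing contributions (giving $ab\cdot cd$) plus a local contribution at the two smoothing sites (giving $\varepsilon$) is exactly how the identity should be verified. The observation that all intersections occur inside a regular neighbourhood of $\fD$, so that the count is intrinsic to the band surface and independent of the filling, is also the right way to see that nothing depends on the choice of $\Si$.

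That said, the two places you flag as delicate are precisely where the content of the exercise lives, and you have asserted rather than proved them. For the mixed entries the issue is sharper than you let on: the paper defines the surface-side entry as $[\fD_x]\cdot[\fD]$ with $\fD_x$ ``the right half as before,'' i.e.\ the same half used in Lemma \ref{thm_W_eq_C}, and that lemma gives $\ind(x)=\sign(x)\,(\fD_x\cdot\fD)$, which differs from the Gauss-diagram entry $\be(x,s)=\ind(x)$ by the factor $\sign(x)$. Your proposed reconciliation --- that reversing the negative arrows in $\wbar{D}$ trades $\fD_x$ for its complementary half, and that $[\fD_x]+[\fD_x^{c}]=[\fD]$ together with $[\fD]\cdot[\fD]=0$ forces the pairing with $[\fD]$ to change sign --- is the correct mechanism, but you must actually verify that the arc $(ab)^{\circ}$ singled out by the flat convention is the complementary half exactly when $\sign(x)=-1$, and the same bookkeeping has to be repeated inside the computation of $\be(x,y)$, where the two crossings may independently be positive or negative. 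Likewise the four configurations of Figure \ref{fig_err} each require an explicit local picture to confirm that the closing arcs at the smoothing sites contribute $\varepsilon$ rather than $-\varepsilon$. None of these checks should fail, but until they are written out the proposal is a correct plan rather than a complete proof.
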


\subsection{Properties of the Graded Genus} Two graded matrices $T_1=(\wh{C}_1,s_1,\be_1)$ and $T_2=(\wh{C}_2,s_2,\be_2)$ are said to be \emph{isomorphic} if there is a bijection $f \co \wh{C}_1 \to \wh{C}_2$ that sends $s_1$ to $s_2$, preserves arrow signs, and satisfies $\be_2(f(x),f(y))=\be_1(x,y)$ for all $x,y \in \wh{C}$. Clearly the isomorphism class of a graded matrix itself is not an invariant of ambient isotopy. However, a graded matrix can always be reduced using a set of rules to a graded matrix whose isomorphism class is an invariant of knots in a fixed thickened surface. The rules (or moves) are as follows:
\begin{enumerate}
\item[($M_1$)] Deletion of an element $x \in C$ such that $\be(x,y)=0$ for all $y \in G$.
\item[($M_2$)] Deletion of an element $x \in C$ such that $\be(x,y)=\be(s,y)$ for all $y \in G$.
\item[($M_3$)] Deletion of a pair $x,y \in C$ of opposite sign such that $\be(x,z)+\be(y,z)=\be(s,z)$ for all $z \in \wh{C}$.
\end{enumerate}
A graded matrix is said to be \emph{primitive} if it does not admit a rule $M_1$, $M_2$, or $M_3$. Reduction to a primitive graded matrix is best done with the matrix $B$ associated $\be$. Rule $M_1$ says to remove row and column $i$ if they are all zeros. Rule $M_2$ says to remove row and column $i$ if they match the first row and column. Rule $M_3$ says to remove rows $i,j$ and columns $i,j$ if they add together to give the first row and column. 

The inverse move of $M_i$ is denoted $M_i^{-1}$. Moves $M_1, M_1^{-1}$ and $M_2, M_2^{-1}$ correspond to $r1$ moves. Move $M_3,M_3^{-1}$ corresponds to the $r2$ move. On the other hand, the $r3$ move has no effect on the isomorphism class of the graded matrix. Turaev proved that if two knots in $\Si \times I$ are equivalent, then they have isomorphic primitive graded matrices. Isomorphic graded matrices necessarily have the same graded genus. In addition, the graded genus of a graded matrix is equal to the graded genus of its reduction to a primitive matrix \cite{Turaev-2008a}. Thus the graded genus is an invariant of knots in a fixed thickened surface. 

Graded matrices $T_1,T_2$ are said to be \emph{concordant}\footnote{Turaev uses the word \emph{cobordant}.} if $\vartheta(T_1,-T_2)=0$. Turaev proved that the graded matrices of concordant knots in thickened surfaces are concordant. Moreover, if two knots in thickened surfaces are concordant, then they have the same graded genus. It is important to note that in contrast to the ambient isotopy relation, we do not need to fix the surface in concordance.

\begin{theorem} If $K_0,K_1$ are concordant virtual knots with graded matrices $T_0,T_1$, respectively, then $\vartheta(T_0)=\vartheta(T_1)$. 
\end{theorem}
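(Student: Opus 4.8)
The plan is to reduce the statement about virtual knots to the already-established concordance invariance of the graded genus for knots in thickened surfaces. The excerpt sets up exactly this bridge: by the correspondence of \cite{Carter-Kamada-Saito} between virtual knots and stable equivalence classes of knots in thickened surfaces, and by the explicit statement that the two notions of concordance (virtual knot concordance via births/deaths/saddles, and concordance of knots in $\Si \times I$) agree, any concordance between virtual knots $K_0$ and $K_1$ lifts to a concordance between representing knots $\fK_0$ in $\Si_0 \times I$ and $\fK_1$ in $\Si_1 \times I$. The second key input is Proposition \ref{prop_same}, which guarantees that the graded matrix computed combinatorially from the Gauss diagram $D_i$ of $K_i$ coincides with the graded matrix of any diagram $\fD_i$ of a representing knot $\fK_i$ in a thickened surface. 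So $\vartheta(T_i)$, defined purely from the Gauss diagram, equals the Turaev graded genus of $\fK_i$.

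First I would fix representatives: let $\fK_0$ in $\Si_0 \times I$ and $\fK_1$ in $\Si_1 \times I$ be knots in thickened surfaces representing $K_0$ and $K_1$, with diagrams $\fD_0, \fD_1$ whose associated Gauss diagrams are $D_0, D_1$. By Proposition \ref{prop_same}, the graded matrix $T_i$ of $D_i$ (the object named in the theorem) is isomorphic to the graded matrix of $\fD_i$, hence they have the same graded genus. Second, since $K_0$ and $K_1$ are concordant virtual knots, the equivalence of the two notions of concordance recorded in the excerpt yields that $\fK_0$ and $\fK_1$ are concordant as knots in thickened surfaces — note that the cobounding manifold $M$ with $\partial M \cong -\Si_0 \sqcup \Si_1$ is allowed, so the surfaces need not be fixed, which matches the remark that concordance of graded matrices does not require fixing the surface. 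Third, I would invoke Turaev's result, quoted just before the theorem, that concordant knots in thickened surfaces have equal graded genus: $\vartheta(\fK_0) = \vartheta(\fK_1)$. Chaining these three equalities gives $\vartheta(T_0) = \vartheta(\fK_0) = \vartheta(\fK_1) = \vartheta(T_1)$, which is the claim.

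I expect the main obstacle to be verifying that the chosen surface representatives behave well with respect to the combinatorial definition of $\vartheta(T_i)$, rather than any deep new argument. Concretely, I would need to be careful that Proposition \ref{prop_same} applies to \emph{some} diagram on \emph{some} surface realizing the virtual knot, and that the graded genus of the virtual knot (defined via its Gauss diagram) is independent of which stable representative is used — but this independence is exactly what the invariance of the primitive graded matrix under the Reidemeister and stabilization moves, discussed in the \emph{Properties of the Graded Genus} subsection, provides. The only genuine content beyond citation is confirming that the virtual-knot concordance hypothesis produces a bona fide concordance of knots in thickened surfaces in the sense Turaev requires (an embedded annulus in $M \times I$ with the correct boundary), which is guaranteed by the stated equivalence of the two concordance notions; everything else is bookkeeping that the graded matrix $T_i$ appearing in the theorem is literally the Turaev graded matrix to which his concordance theorem applies.
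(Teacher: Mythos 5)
Your proposal is correct and follows essentially the same route as the paper: pass to representatives in thickened surfaces, use Proposition \ref{prop_same} together with the invariance properties of the graded matrix to identify $\vartheta(T_i)$ with Turaev's graded genus of $\fK_i$, use the equivalence of the two notions of concordance to obtain concordant knots in thickened surfaces, and conclude by Turaev's theorem that concordant knots in thickened surfaces have equal graded genus. The only cosmetic difference is that the paper handles independence of the choice of stable representative by noting that equivalent representatives are themselves concordant, whereas you appeal to invariance of the primitive graded matrix under the moves; both are covered by the remarks preceding the theorem.
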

\begin{proof} By \cite{Carter-Kamada-Saito}, we may assume that there are knots $\fK_0$, $\fK_1$ in some thickened surfaces representing the virtual knots $K_0,K_1$, respectively, such that $\fK_0$ is concordant to $\fK_1$. Moreover, if two knots in thickened surfaces represent equivalent virtual knots, then they are also concordant knots in thickened surfaces. It follows from Proposition \ref{prop_same} and the remarks in the preceding two paragraphs that the graded genus of $K_i$ is equal to the graded genus of $\fK_i$ for $i=0,1$. Since $\fK_0$ and $\fK_1$ are concordant, they also have the same graded genus.
\end{proof}

As a direct consequence, we deduce that  the graded genus defines a concordance invariant on virtual knots,  denoted $\vartheta(K)$. In fact, by \cite[Lemma 5.2.1]{Turaev-2008a}, we see that the graded genus gives information on the slice genus via the inequality
$\vartheta(K) \leq 2 g_s(K).$

\subsection{An application of $\vartheta(K)$} 
Given a classical knot $K$ with only positive crossings, one can show that the 4-ball genus of $K$ is equal to its Seifert genus. In particular, any such knot is not slice as long as it is nontrivial. We consider the analogous problem for non-classical virtual knots with only positive crossings, and we show how to make the same conclusion by showing that $\vartheta(K)>0.$ We begin by recalling the Cairns-Elton criterion for deciding planarity of a Gauss word $\om$.
  
Let $\kappa$ be an immersed circle on an orientable surface $\Si$ whose multiple points are all transversal double points. Then $\kappa$ can be described as a Gauss word $\om$ \cite{Cairns-Elton-1993}. The \emph{planarity problem} is to determine when a given $\om$ can be represented by a curve $\kappa$ in $\RR^2$. Flat knots are virtual knots considered equivalent up to a finite number of crossing changes. The Gauss code of a flat knot diagram thus only remembers whether intersecting arcs pass from left-to-right or right-to-left while the diagram is traversed. In the virtual knot terminology, the planarity problem then is to determine when the Gauss code of a flat knot diagram is a flat classical knot diagram. In other words, whether or not the diagram can be drawn without virtual crossings.

Recall that the graded matrix of a Gauss diagram $D$ of a virtual knot $K$ is the triple $(\wh{C},s,\be)$, Here $\be \co \wh{C} \times \wh{C} \to \ZZ$ is completely determined by the flat Gauss diagram $\wbar{D}$. In \cite{Cairns-Elton-1993}, a solution to the planarity problem was given that directly translates into the following condition: $\wbar{D}$ is a classical flat diagram if and only if $\be \equiv 0$.

A virtual knot $K$ is said to be \emph{positive} ({\emph{negative}) if it can be represented by a Gauss diagram $D$ with all positive (all negative, respectively) crossing. 

\begin{theorem} \label{gg-app}
Suppose $K$ is a non-classical virtual knot. If $K$ is positive or negative, then the graded genus satisfies $\vartheta(K) \neq 0$. In particular, $K$ is not slice.
\end{theorem}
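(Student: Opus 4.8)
The plan is to prove the stronger statement $\vartheta(K) \neq 0$, from which non-sliceness follows at once: since $\vartheta(K) \leq 2 g_s(K)$, a nonzero graded genus forces $g_s(K) > 0$, so $K$ cannot be slice. Because $\vartheta$ is a virtual knot invariant, I am free to compute it from any convenient diagram. As $K$ is positive, I fix an all-positive Gauss diagram $D$ representing $K$, with graded matrix $T=(\wh{C},s,\be)$, so that $C=C_+$ and $C_-=\varnothing$. The negative case runs identically (all chords then being negative), or one may pass to the mirror image, under which classicality, sliceness, and the vanishing of $\vartheta$ are all preserved.

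The first step is to identify the simple graded fillings of $T$. Condition (3) in the definition requires that any $x\in\la$ lying outside $\wh{C}$ have the form $x=g+h$ with $g,h\in C$ of opposite sign. Since every chord of $D$ is positive, no such pair exists, so each simple graded filling satisfies $\la\subseteq\wh{C}$. Combining this with condition (2), that $\sum_{x\in\la}x=\sum_{y\in\wh{C}}y$, and the fact that $\wh{C}$ is a basis of $\ZZ[\wh{C}]$, forces $\la=\wh{C}$. Hence $\wh{C}$ is the \emph{unique} simple graded filling, and therefore
\[
\vartheta(K)=\vartheta(\wh{C})=\tfrac{1}{2}\operatorname{rank}_{\ZZ}(B),
\]
where $B$ is the matrix of $\be$. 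In particular, $\vartheta(K)=0$ if and only if $\be\equiv 0$.

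It remains to show $\be\not\equiv 0$. Here I would invoke the Cairns--Elton criterion recorded above \cite{Cairns-Elton-1993}: $\be\equiv 0$ exactly when the flat Gauss diagram $\wbar{D}$ is classical, i.e.\ realizable by an immersed curve in $\RR^2$. So it suffices to show that a non-classical positive virtual knot has a non-classical flat shadow, and I would argue this by contraposition. If $\wbar{D}$ is classical, realize it by a plane curve $\kappa$ and then assign crossing information at each double point of $\kappa$ so that every crossing becomes positive. Since $D$ is recovered from $\wbar{D}$ simply by restoring positive signs to its chords, this lift is a classical knot diagram representing $K$, contradicting the hypothesis that $K$ is non-classical. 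Consequently $\wbar{D}$ is non-classical, $\be\not\equiv 0$, and $\vartheta(K)=\tfrac{1}{2}\operatorname{rank}_{\ZZ}(B)>0$, as desired.

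The main obstacle is precisely this last lifting step: one must match the oriented lift of the planar shadow $\kappa$ to the directed chords of $D$, checking that the sign-versus-direction convention for a positive crossing is compatible with the direction convention used to build $\wbar{D}$, so that declaring every crossing positive reproduces $D$ itself rather than some unrelated all-positive diagram. This is exactly where the constant-sign hypothesis is essential, since it removes the ambiguity between arrow direction and crossing sign that would otherwise obstruct the reconstruction in the mixed-sign case. Granting this compatibility, the three ingredients --- uniqueness of the simple graded filling, the resulting rank/vanishing dichotomy, and the Cairns--Elton translation of $\be\equiv 0$ into planarity of $\wbar{D}$ --- combine to give $\vartheta(K)\neq 0$ and hence $g_s(K)>0$.
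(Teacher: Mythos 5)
Your argument is correct and is essentially the paper's proof: uniqueness of the simple graded filling for a constant-sign diagram, the resulting rank/vanishing dichotomy, and the Cairns--Elton translation of $\be\equiv 0$ into planarity of $\wbar{D}$; you additionally spell out the step the paper leaves implicit, namely why a non-classical $K$ forces $\wbar{D}$ to be non-planar. One small caveat: the lifting of a planar flat shadow to a classical diagram realizing $D$ in fact works for arbitrary sign data (each flat crossing has exactly two over/under lifts, realizing either sign with the appropriate arrow direction), so the constant-sign hypothesis is genuinely needed only for the uniqueness of the simple graded filling, not for the reconstruction step as your closing paragraph suggests.
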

\begin{proof} Suppose that $D$ is a Gauss diagram of $K$ having all positive or all negative crossings. Let $(\wh{C},s,\be)$ be the graded matrix for $D$. Then the graded matrix has only one simple filling, namely itself.  Since there is only one simple filling, the graded genus is just half the rank of the matrix of $\be$. The only matrix having rank $0$ is the zero matrix. However, since $\wbar{D}$ is non-planar, the Cairns-Elton condition implies that $\be$ is not the zero matrix. Thus $\vartheta(K) \neq 0.$ 
\end{proof}

\begin{corollary} If $K$ is a non-classical slice virtual knot, then every diagram for $K$ must have a non-empty set of positive crossings and a non-empty set of negative crossings.  
\end{corollary}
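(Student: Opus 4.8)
The plan is to deduce the corollary directly from Theorem \ref{gg-app} by contraposition, since the corollary is essentially the logical repackaging of that theorem. Suppose $K$ is a non-classical slice virtual knot, and suppose toward a contradiction that $K$ admits a diagram $D$ failing the stated conclusion; that is, a diagram whose classical crossings all carry the same sign, or a diagram with no classical crossings at all.

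First I would dispose of the crossingless case: if $D$ has no classical crossings, then its associated Gauss diagram has no chords, so $K$ is the trivial knot and in particular \emph{classical}, contradicting the hypothesis that $K$ is non-classical. Hence $D$ must have at least one classical crossing, and the remaining possibilities are that the set of positive crossings is empty or that the set of negative crossings is empty.

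Next I would split into these two cases. If the set of negative crossings of $D$ is empty, then every classical crossing of $D$ is positive, so by the definition of a positive virtual knot, $K$ is positive. Symmetrically, if the set of positive crossings of $D$ is empty, then $K$ is negative. In either case $K$ is a non-classical virtual knot that is positive or negative, so Theorem \ref{gg-app} applies and yields $\vartheta(K) \neq 0$; hence $K$ is not slice. This contradicts the assumption that $K$ is slice, and the contradiction establishes that every diagram of $K$ must contain both a positive and a negative classical crossing.

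There is essentially no hard step here, since the substantive content lives in the proof of Theorem \ref{gg-app} (the observation that an all-positive or all-negative diagram has a unique simple filling, combined with the Cairns--Elton nonplanarity criterion forcing $\be \not\equiv 0$). The only point requiring a moment's care is the bookkeeping around the word \emph{empty}: one must confirm that excluding both an empty positive set and an empty negative set is precisely the assertion that both sets are non-empty, and that the degenerate crossingless diagram is ruled out by the non-classicality hypothesis rather than by the graded-genus obstruction itself.
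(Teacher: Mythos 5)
Your proof is correct and matches the paper's (implicit) reasoning: the corollary is stated without proof precisely because it is the contrapositive of Theorem \ref{gg-app}, which is exactly how you argue. Your extra care with the crossingless diagram is harmless and fine.
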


\subsection*{A graded genus calculator} The odd writhe, Henrich-Turaev polynomial, and writhe polynomial are all easily computable by hand. The graded genus, on the other hand, is more challenging. To aid the reader, a graded genus calculator is available online at \cite{Boden-Chrisman-Gaudreau-2017t}. 




\section{Discussion} \label{sec-discussion}
Clearly, there is much more work to be done, and future progress will depend on developing new concordance invariants in the virtual category. One promising line of research is Khovanov homology and its associated Rasmussen invariant, and that approach has already been fruitful, as evidenced by the results in \cite{Dye-Kaestner-Kauffman-2014} and in the recent preprint of Rushworth \cite{Rushworth-2017}. 

For classical knots, the knot signature was the first and is still arguably the most important concordance invariant. The signature is defined in terms of the Seifert pairing, and since Seifert surfaces do not generally exist for virtual knots, extending the knot signature to the virtual setting is a key problem. In their paper \cite{Im-Lee-Lee-2010}, 
Im, Lee, and Lee use Goeritz matrices to define
signature-type invariants for virtual knots with checkerboard colorings. These invariants depend on the choice of checkerboard coloring, so any given virtual knot will typically have two signatures, one for each choice of black-white surface.

The invariants are relatively straightforward to compute, and they often vanish on slice knots, but there are a few exceptions. For instance, the signatures are non-zero for the  virtual knots 5.2024, 5.2132, 6.73583, and 6.75348. Because each of these four knots is slice, this shows that the signature-type invariants defined in \cite{Im-Lee-Lee-2010} are not actually invariant under virtual knot concordance. In \cite{Boden-Chrisman-Gaudreau-2017a}, which is a companion to this one, we develop an alternative approach to defining signature invariants for almost classical knots. (A virtual knot is \emph{almost classical} if it can be represented as a homologically trivial knot in a thickened surface.) Such knots admit Seifert surfaces, and we use the linking pairing and resulting Seifert matrices to define signatures, twisted signatures, and Alexander-Conway polynomials (cf. \cite{Boden-Gaudreau-Harper-2016}) and derive slice obstructions and slice genus bounds from them.

We close with some interesting questions and open problems.

\begin{enumerate}
\item Ribbon knots can often be sliced in different ways, and ribbon presentations, defined in \cite[\S 2.2]{Carter-Kamada-Saito-2004},  can be \emph{simply equivalent} or \emph{stably equivalent}. Simple equivalence implies stable equivalence but not vice versa; see \cite[Example 2.12]{Carter-Kamada-Saito-2004}. 
What are the corresponding notions for virtual knots? Are any two ribbon presentations of the same virtual knot necessarily stably equivalent?

\item If a classical knot is virtually slice, then it is classically slice (see \cite{Boden-Nagel-2016} for a proof). Does the virtual slice genus of a classical knot always equal its classical slice genus?

\item As slice obstructions, does the graded genus dominate the writhe polynomial? I.e. does $\vartheta(K) =0 \Rightarrow W_K(t)=0$ for all virtual knots $K$?

\item For all known slice virtual knots, we have $H_K(s,t,q)=0.$ Here $H_K(s,t,q)$ denotes the virtual Alexander polynomial, which was introduced in \cite{Boden-Dies-Gaudreau-2015} and  is essentially equal to the Sawollek polynomial $G_K(s,t)$ \cite{Sawollek, Silver-Williams-2003}.  Do these invariants vanish for all slice knots? Are they concordance invariants?  
\end{enumerate}

\begin{conjecture} \label{conj-HK}
If $K$ is a virtual knot and is slice, then  $H_K(s,t,q)=0.$  
\end{conjecture}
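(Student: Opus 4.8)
The plan is to run the template that succeeds for the writhe polynomial in Theorem~\ref{thm_writhe_poly}, but to replace the signed-count cancellation of Lemma~\ref{lemma_turaev_invol} with a homological metabolizer argument, since $H_K(s,t,q)$ is a determinantal rather than an additive invariant. First I would pass to knots in thickened surfaces: by \cite{Carter-Kamada-Saito} a slice virtual knot $K$ is represented by a knot $\fK \subset \Si \times I$ that bounds a properly embedded disk $\De$ in $M \times I$, where $M$ is a compact oriented $3$-manifold with $\partial M = \Si$. The first technical step is to give an intrinsic topological description of $H_K$ (equivalently of the Sawollek polynomial $G_K$ of \cite{Boden-Dies-Gaudreau-2015}) for $\fK \subset \Si \times I$. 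I expect $H_K$ to be realizable as the order of an Alexander-type module $\cH_{*}(\wh{X})$ of an abelian cover $\wh{X}$ of the exterior of $\fK$, with the three variables $s,t,q$ recording the three relevant deck directions (a meridional direction, a longitudinal direction, and the $H_1(\Si)$ direction); establishing this identification and checking that it agrees with the diagrammatic definition is the first hurdle.

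Second, I would record the formal properties needed to reduce the conjecture to a vanishing statement. As in the proof of Theorem~\ref{thm_writhe_poly}, one checks additivity of $H_K$ under connected sum, the behavior of $H_{-K}$ and of the mirror $H_{K^{*}}$ under the substitutions $s \mapsto s^{-1}$, $t \mapsto t^{-1}$, $q \mapsto q^{-1}$, and the value on the unknot. Together these reduce the problem to showing $H_K = 0$ whenever $K$ is slice, and by the equivalence of the two notions of concordance it then suffices to prove the corresponding statement for $\fK \subset \Si \times I$.

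The heart of the argument is a Fox--Milnor / half-lives-half-dies computation driven by the slice disk. Writing $W = M \times I$ and $Y = W \smallsetminus N(\De)$ for the exterior of $\De$, I would compute the relevant Alexander module from the long exact sequence of the pair $(Y,\partial Y)$ and use Poincar\'e--Lefschetz duality over the appropriate Laurent ring to produce a self-annihilating submodule of the pairing that computes $H_K$. This would force the order ideal to factor as $f \cdot \wbar{f}$ with $\wbar{f}(s,t,q) = f(s^{-1},t^{-1},q^{-1})$. The step special to this multivariable setting is then to combine the factorization with the intrinsic (skew-)symmetry of the pairing --- the same skew-symmetry that makes $\be$ antisymmetric in \S\ref{ssec-gg} --- and to argue that this collapses $f \cdot \wbar{f}$ to $0$ rather than to a nontrivial norm, yielding $H_K = 0$.

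The main obstacle is precisely this last collapse, together with the absence of Seifert surfaces for general virtual knots. For \emph{almost classical} knots one can bypass much of this by using the Seifert-matrix and Fox--Milnor machinery of the companion paper \cite{Boden-Chrisman-Gaudreau-2017a}; the genuine difficulty lies in the general case, where no Seifert surface exists and there is no a priori form from which to read off a metabolizer. Unlike the signed invariants of \S\ref{ssec-writhe}, whose vanishing follows crossing-by-crossing from Lemma~\ref{lemma_turaev_invol}, $H_K$ sees the full Alexander module, so a plain Fox--Milnor factorization is not enough: it gives a norm, not zero. The delicate part, and the place I expect to require a new idea, is the careful analysis of the $q$-variable corresponding to the $H_1(\Si)$ direction, which has no classical analogue and is exactly what allows $H_K$ to be nonzero on non-slice virtual knots; one must show that sliceness degenerates this direction so that the factorization is forced to be trivial.
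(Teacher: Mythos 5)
The statement you are addressing is stated in the paper as Conjecture~\ref{conj-HK} and is left \emph{open} there: the authors offer only computational evidence (the vanishing of $H_K$ on all slice virtual knots with up to six crossings) and explicitly pose ``Do these invariants vanish for all slice knots? Are they concordance invariants?'' as unanswered questions. So there is no proof in the paper to compare yours against, and what you have written is, by your own account, a research plan rather than a proof. That is the right way to frame it, but it should be judged as such: it does not establish the conjecture.

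The gaps you flag are genuine and they sit exactly where the difficulty lies. First, the identification of $H_K(s,t,q)$ with the order of an Alexander-type module of an abelian cover of the exterior of $\fK\subset\Si\times I$, with $q$ tracking the $H_1(\Si)$ deck direction, is asserted rather than established; without it the entire duality argument has nothing to act on. Second, and more seriously, a Fox--Milnor/half-lives-half-dies argument can at best produce a factorization $f\cdot\wbar{f}$ up to units, i.e.\ a norm condition, whereas the conjecture demands identical vanishing of $H_K$. You acknowledge this, but the proposed mechanism for the collapse --- invoking ``the same skew-symmetry that makes $\be$ antisymmetric'' --- conflates the intersection form on $H_1(\Si)$ underlying the graded matrix with a linking/Blanchfield-type pairing on an Alexander module over a three-variable Laurent ring; these are different objects and no argument is given that the latter is skew in a way that kills the norm. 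The observation that sliceness must somehow ``degenerate the $q$-direction'' is a reasonable guess (it is consistent with $H_K$ vanishing on all classical knots), but as written it is the conjecture restated, not a step toward proving it. In short: the strategy is plausible in outline, but every load-bearing step is missing, and the paper gives you no proof to lean on because none exists there.
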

Computations for virtual knots with up to six crossings support affirmative answers to all questions (2), (3), and (4) above. For example, if $K$ is a classical knot whose 4-ball genus satisfies $2g_4(K) = |\si(K)|,$ then the results of \cite{Boden-Chrisman-Gaudreau-2017a} show the same is true for the virtual slice genus $g_s(K)$, and it follows that $g_s(K) = g_4(K)$ for such knots.

The slice status of the virtual knots 4.12, 5.93, 5.344, 5.212, 5.919,  5.1034, 5.2351, 5.2430, and 5.2435 is unknown, and  all of them have trivial graded genus but nontrivial virtual Alexander polynomial. If the Conjecture \ref{conj-HK} were true, then it would imply that none of them are slice. The same reasoning would apply to another 182 of the 6-crossing knots whose slice status is unknown. The net effect would be a significant reduction in the number of ``unknown'' cases in Table \ref{table-1}.

\section{Tables and Tabulation}

\subsection{Method of tabulation} For virtual knots with all positive or all negative crossings, the slice genus is determined by Theorem \ref{thm-DKK}. For virtual knots having both positive and negative crossings, Theorem \ref{g-s-bound} gives a useful upper bound which equals 1 for virtual knots with three or four crossings and 2 for virtual knots with five or six crossings. The graded genus also provides a lower bound on the slice genus. Gauss diagram surgeries provide a fast way to identify slice virtual knots. Furthermore, the arrow operations $\{\textbf{cc},\textbf{cd},\textbf{sc},\textbf{or}\}$ and the crossed saddle move all produce genus one cobordisms.  As arrow operations are easy tasks for a computer to perform, this suggests an industrial approach to computing the slice genus for the 92800 distinct virtual knots in Green's table. Indeed, if one of these operations produces a slice knot, then the operation identifies a genus one cobordism to the unknot.  
 
In general it is difficult for a computer to recognize if a virtual knot is slice. Therefore, the first task is to determine all slice knots. As seen in Table \ref{table-1}, the graded genus is a useful slice obstruction. We attempted to slice (by hand) all knots with $\vartheta=0$. Successes for the four and five crossing knots are displayed in Figure \ref{fig-slice}. A list of the 1237 successes for six crossing knots, along with slicings, can be found online at \cite{Boden-Chrisman-Gaudreau-2017t}. 

Given an arbitrary virtual knot, we apply an arrow operation and compare the resulting virtual knot to those on this list of slice virtual knots. If it (or one of its symmetries) is on the list, then  the original knot has slice genus at most one. The next theorem gives a useful sufficient condition for a  virtual knot with four or fewer crossings to be slice.

\begin{theorem}[SliceQ] \label{thm_sliceq} Let $K$ be a virtual knot diagram having four or fewer classical crossings. If the Kauffman bracket polynomial $\langle K \rangle = 1$ and the graded genus $\vartheta(K) =0$, then $K$ is slice.
\end{theorem}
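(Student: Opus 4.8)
The plan is to treat this as a finite verification. By Green's tabulation there are only $1 + 7 + 108 = 116$ virtual knots with four or fewer classical crossings (up to the symmetries generated by mirror image and orientation reversal), and both $\langle K\rangle$ and $\vartheta(K)$ are invariants of the underlying virtual knot that are computable directly from a Gauss diagram. So the statement reduces to checking, over this finite list, that every knot satisfying the two hypotheses is slice. I would organize the check by first isolating the sublist $S$ of knots with $\langle K\rangle = 1$ and $\vartheta(K)=0$, and then exhibiting an explicit genus-zero cobordism to the unknot for each member of $S$.

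Before any brute force, I would cut the list down structurally. In the classical case the only knots with four or fewer crossings are the unknot, the trefoil $3_1$, and the figure-eight $4_1$; the latter two have nontrivial Kauffman bracket (equivalently, nontrivial Jones polynomial), so the hypothesis $\langle K\rangle=1$ forces $K$ to be the unknot, which is slice. For the non-classical knots, the condition $\vartheta(K)=0$ is exactly the necessary condition for sliceness coming from the inequality $\vartheta(K)\le 2g_s(K)$, and by Theorem \ref{gg-app} it already excludes every non-classical positive or negative knot. Moreover Theorem \ref{g-s-bound} guarantees $g_s(K)\le 2$ for every four-crossing knot, so the only remaining task is to improve this to $g_s(K)=0$ on the sublist $S$. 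There is no general theorem forcing sliceness from the two hypotheses alone, so each surviving knot must be sliced individually.

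The main work, and the real obstacle, is producing the explicit slicings for the knots in $S$. These are found by hand: one represents each $K\in S$ by a Gauss diagram and searches for a pair of intersecting chords, or more generally a suitable sequence of fission and fusion saddles together with Reidemeister moves, realizing a genus-zero cobordism to the unknot, as illustrated for $4.71$ in Figure \ref{4-71}. The role of the Kauffman bracket hypothesis is precisely to remove from the $\vartheta=0$ list those low-crossing knots that are not slice, and those whose status would otherwise be unknown, so that what remains is exactly a set of knots for which such slicings exist; the completed slicings for the four- and five-crossing cases are recorded in Figure \ref{fig-slice}. Since the verification is finite and each step is a mechanical Gauss-diagram computation, the proof is a computer-assisted check whose correctness rests on the completeness of Green's enumeration together with the explicit slicings we exhibit.
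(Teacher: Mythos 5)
Your proposal is correct and follows essentially the same route as the paper: reduce to a finite check over Green's table of virtual knots with at most four crossings, compute $\langle K\rangle$ and $\vartheta(K)$ to isolate the surviving knots (the paper finds these to be exactly 4.55, 4.56, 4.59, 4.72, 4.76, 4.77, and 4.98 among the four-crossing knots, with the lower-crossing cases ruled out by the graded genus or the Kauffman bracket), and then verify sliceness by the explicit hand-made slicings recorded in Figure \ref{fig-slice}. Your preliminary structural reductions (classical knots, Theorem \ref{gg-app}) are sound but not needed beyond what the tabulated computation already delivers.
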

\begin{proof} By assumption, the crossing number of $K$ is at most 4. If $K$ is equivalent to the trivial knot, then $K$ has Kauffman bracket 1 and graded genus zero. It is also slice, so the theorem is true in this case. From Table \ref{table-3}, it follows that virtual knots with crossing number two have non-zero graded genus. Thus we can ignore this case. For crossing number 3, the only knot with trivial graded genus is 3.6, which is the 
trefoil. Its Kauffman bracket is not 1, so this case can be ignored as well. For the 108 knots having crossing number 4, one can compute the graded genus and the Kauffman bracket. The only knots having having both graded genus 0 and Kauffman bracket 1 are 4.55, 4.56, 4.59, 4.72, 4.76, 4.77, and 4.98. As these knots are slice (see Figure \ref{fig-slice}), the theorem follows. 
\end{proof}

Putting this all together gives the following method of tabulation. Let $K$ be a virtual knot of crossing number $N$. If $K$ has only crossings of the same sign, use Theorem \ref{thm-DKK}. Let $\ell_N$ be the list of $N$ crossing number knots that are known to be slice, together with all their symmetries. The remainder of the algorithm depends on $N$. For virtual knots $K$ with up to four crossings,  Theorem \ref{thm_sliceq} applies to show that if  the following conditions are satisfied, then $K$ is slice: 
\begin{equation} \label{eq-sliceQ}
\langle K \rangle = 1 \quad \text{and} \quad \vartheta(K) =0.
\end{equation}
  
\smallskip  \noindent
{\bf Case $\boldsymbol{N = 4}$:} For each crossing of $K$, perform arrow operations from $\{\textbf{cd},\textbf{cc},\textbf{sc},\textbf{or}\}$ to obtain a knot $K'$. For $\textbf{cd}$, $K'$ is a virtual knot with three or fewer crossings, and it is slice if and only if it is equivalent to the unknot. In this case, equation \eqref{eq-sliceQ} provides a necessary and sufficient condition for sliceness. In the case of $\{\textbf{cc},\textbf{sc},\textbf{or}\}$, $K'$ has four or fewer crossings. If it is in $\ell_4$, then $K$ has a genus one cobordism to the unknot. Otherwise, we check if $K'$ satisfies \eqref{eq-sliceQ}. If so, then $K'$ is slice and it follows that $K$ admits a genus one cobordism to the unknot. 
 
\smallskip \noindent
 {\bf Case  $\boldsymbol{N=5}$:} For each crossing of $K$, perform each of the arrow operations from $\{\textbf{cd},\textbf{cc},\textbf{sc},\textbf{or}\}$ to obtain a knot $K'$. For $\textbf{cd}$, $K'$ is a virtual knot with four or fewer crossings. In this case, it is sufficient to check if $K'$ is in $\ell_4$ or satisfies \eqref{eq-sliceQ}. In the case of $\{\textbf{cc},\textbf{sc},\textbf{or}\}$, $K'$ is a five crossing knot. If $K'$ is in $\ell_5$, then $K$ has a genus one cobordism to the unknot. If not, observe that $\textbf{cc},\textbf{sc},\textbf{or}$ may have introduced a $r2$ move. Removing the $r2$ move creates a knot having $3$ crossings. If the new knot satisfies \eqref{eq-sliceQ}, then $K'$ is slice and $K$ has a genus one cobordism to the unknot.  If all arrow operations fail, apply the crossed saddle move to each pair of intersecting arrows. This creates a diagram $K''$ having $3$ arrows. If $K''$ satisfies \eqref{eq-sliceQ}, then $K$ has a genus one cobordism to the unknot.
 
\smallskip \noindent
 {\bf Case  $\boldsymbol{N=6}$:} Proceed as in the $N=4,5$ cases to obtain a knot $K'$. For $\textbf{cd}$, $K'$ is a five crossing knot. If $K'$ is in $\ell_5$, then we are done. Otherwise, an $r1$ or $r2$ move may have been introduced. If so, remove such arrows and see if the resulting knot, which has at most 4 crossings, satisfies \eqref{eq-sliceQ}. For $\textbf{cc},\textbf{sc},\textbf{or}$ proceed as in the $N=5$ case. If all arrow operations fail, apply the crossed saddle move to pairs of intersecting arrows in the Gauss diagram.

\subsection{The tables} The method of the previous section was implemented in \emph{Mathematica}. The results for virtual knots with four or fewer crossings is given in Table \ref{table-3}, and the full dataset of results for virtual knots with four, five, and six crossings can be found online at \cite{Boden-Chrisman-Gaudreau-2017t}.

The dataset table contains more detailed information about each virtual knot, including its slice status and upper/lower bounds on its slice genus. In case $g_s(K)=1$, it indicates a method for realizing the genus one cobordism to the unknot with a specific virtual unknotting operation $\{ \textbf{cc},\textbf{cd},\textbf{sc},\textbf{or}\}$ applied to a specific chord, or with a crossed saddle move applied to two specific chords.

A snapshot of the six crossing table is given in Figure \ref{fig_table}. The first two columns in this table give the name and Gauss code of the virtual knot. The fourth column ``GG'' gives the graded genus. Half of this value provides a lower bound for the slice genus. The method used for the upper bound is given in the third column. See below for a legend:

\begin{figure}[htb]
\begin{tabular}{c} \\
\def\svgwidth{5in}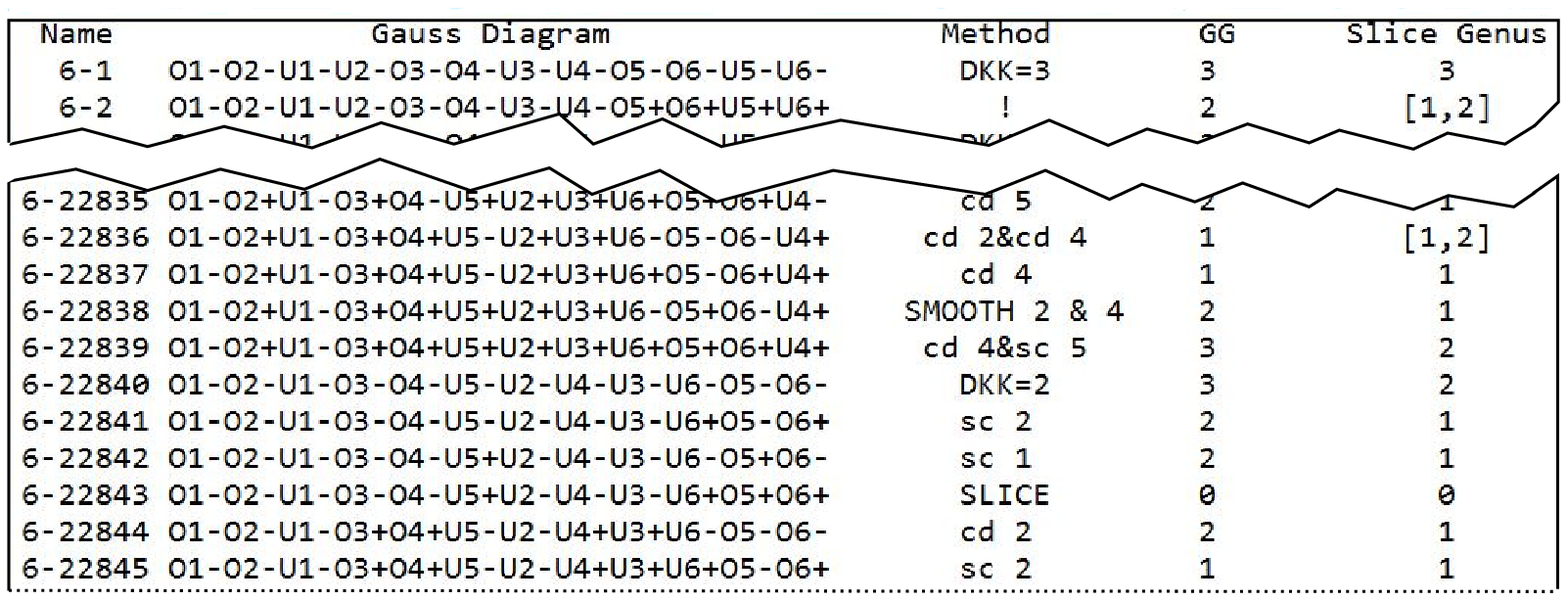 \end{tabular}
\caption{A snapshot of the 6 crossing table.} \label{fig_table}
\end{figure}

\begin{itemize}
\item ``SLICE''--The knot is slice. Explicit slicings are in Figure \ref{fig-slice} for virtual knots with four and five crossings, and in the dataset \cite{Boden-Chrisman-Gaudreau-2017t} for knots with six crossings.

\item ``DKK=$\#$''--Gauss code has only positive or negative crossings and the Dye-Kaestner-Kauffman theorem gives $\#$ as the slice genus. 
\item ``xy $\#$'', xy$\in \{\textbf{cd}, \textbf{or}, \textbf{sc}, \textbf{cc}\}$--The operation xy on the indicated arrow  $\#$ gives a genus one cobordism to the unknot.
\item ``SMOOTH $\#1 \, \& \, \#2$''--A crossed saddle move on arrows $\#1$ and $\#2$ gives a genus one cobordism to the unknot.
\item ``wx $\#1$ $\&$ yz $\#2$'', wx, yz$\in \{\textbf{cd}, \textbf{or}, \textbf{sc}, \textbf{cc}\}$--A genus \underline{two} cobordism to the unknot is obtained by performing the stated operations on the given arrows $\#1$ and $\#2$.
\item ``!''--All above methods failed.
\end{itemize}

Note that for the six crossing table, at most one successful method for the upper bound is given for each knot. More than one method might be successful. Due to the large number of six crossing number knots, the program was written to stop looking as soon as a successful method was found. For the four and five crossing knots, all successful arrow operations are indicated in the third column of the table. If all arrow operations failed, the crossed saddle move was tried. The same notation as above is used to indicate success. Note that the crossed saddle move was never needed to achieve an upper bound of $1$ in the four crossing case. For typographical reasons, applications of the Dye-Kaestner-Kauffman theorem are placed in a separate column for the four and five crossing tables.

\subsection*{Acknowledgements} We would like to thank J. Scott Carter and Andrew Nicas for useful discussions, as well as Louis Kauffman and William Rushworth for their input.  H. Boden was supported by a grant from the Natural Sciences and Engineering Research Council of Canada, M. Chrisman was supported by a Monmouth University Creativity and Research Grant, and
R. Gaudreau was supported by a scholarship from the National Centre of Competence in Research SwissMAP.

\vfill
\newpage

\begin{table}[H]  
\begin{tabular}{cc}
\begin{minipage}{0.32\textwidth}
\begin{tabular}{|c|c|c|}
\hline

Virtual & Graded & Slice \\ 
knot & genus &   genus  \\ 
\hline 
\hline
2.1 & 1 & 1  \\ \hline
3.1 & 1 & 1  \\ \hline
3.2 & 1 & 1  \\ \hline
3.3 & 2 & 1  \\ \hline
3.4 & 1 & 1  \\ \hline
3.5 & 1  & 1 \\ \hline
{\bf 3.6} & 0  & 1 \\ \hline
3.7 & 1 & 1  \\ \hline
4.1 & 2 & 2  \\ \hline
4.2 & 1 & 1  \\ \hline
4.3 & 2 & 2  \\ \hline
4.4 & 1  & 1 \\ \hline
4.5 & 1 & 1  \\ \hline
4.6 & 1 & 1 \\ \hline
4.7 & 2 & 2 \\ \hline
4.8 & 0  & 0 \\ \hline
4.9 & 2 & 2 \\ \hline
4.10 & 2 & 1  \\ \hline
4.11 & 1 & 1  \\ \hline
4.12 & 0 & 0 or 1 \\ \hline
4.13 & 1 & 1   \\ \hline
4.14 & 1 & 1   \\ \hline
4.15 & 2 & 2   \\ \hline
4.16 & 1 & 1   \\ \hline
4.17 & 1 & 1   \\ \hline
4.18 & 2 & 1   \\ \hline
4.19 & 1 & 1   \\ \hline
4.20 & 1 & 1   \\ \hline
4.21 & 1 & 1   \\ \hline
4.22 & 1 & 1   \\ \hline
4.23 & 2 & 1   \\ \hline
4.24 & 1 & 1   \\ \hline
4.25 & 2 & 1   \\ \hline
4.26 & 1 & 1   \\ \hline
4.27 & 1 & 1   \\ \hline
4.28 & 1 & 1   \\ \hline
4.29 & 2 & 2   \\ \hline

4.30 & 1 & 1   \\ \hline
4.31 & 1 & 1   \\ \hline

\end{tabular}
\end{minipage}

\begin{minipage}{0.32\textwidth}

\begin{tabular}{|c|c|c|}
\hline
Virtual & Graded & Slice \\ 
knot & genus &   genus  \\ 
\hline 
\hline
4.32 & 1 & 1   \\ \hline
4.33 & 1 & 1   \\ \hline
4.34 & 1 & 1   \\ \hline
4.35 & 1 & 1   \\ \hline
4.36 & 1 & 1   \\ \hline
4.37 & 1 & 2   \\ \hline
4.38 & 2 & 1   \\ \hline
4.39 & 2 & 1   \\ \hline

4.40 & 1 & 1   \\ \hline
4.41 & 1 & 1   \\ \hline
4.42 & 1 & 1   \\ \hline
4.43 & 1 & 1   \\ \hline
4.44 & 1 & 1   \\ \hline
4.45 & 2 & 1   \\ \hline
4.46 & 1 & 1   \\ \hline
4.47 & 1 & 1   \\ \hline
4.48 & 2 & 2   \\ \hline
4.49 & 2 & 1   \\ \hline

4.50 & 1 & 1   \\ \hline
4.51 & 1 & 1   \\ \hline
4.52 & 1 & 1   \\ \hline
4.53 & 1 & 2   \\ \hline
4.54 & 1 & 1   \\ \hline
4.55 & 0 & 0   \\ \hline
4.56 & 0 & 0   \\ \hline
4.57 & 2 & 1   \\ \hline
4.58 & 0 & 0   \\ \hline
4.59 & 0 & 0   \\ \hline

4.60 & 1 & 1   \\ \hline
4.61 & 2 & 2   \\ \hline
4.62 & 1 & 1   \\ \hline
4.63 & 1 & 1   \\ \hline
4.64 & 1 & 1   \\ \hline
4.65 & 1 & 1   \\ \hline
4.66 & 1 & 1   \\ \hline
4.67 & 1 & 1   \\ \hline
4.68 & 1 & 1   \\ \hline
4.69 & 2 & 2   \\ \hline
4.70 & 1 & 1   \\ \hline

\end{tabular}
\end{minipage}

\begin{minipage}{0.32\textwidth}

\begin{tabular}{|c|c|c|}
\hline
Virtual & Graded & Slice \\ 
knot & genus &   genus  \\ 
\hline 
\hline

4.71 & 0 & 0   \\ \hline
4.72 & 0 & 0   \\ \hline
4.73 & 1 & 2   \\ \hline
4.74 & 1 & 1   \\ \hline
4.75 & 0 & 0   \\ \hline
4.76 & 0 & 0   \\ \hline
4.77 & 0 & 0   \\ \hline
4.78 & 2 & 2   \\ \hline
4.79 & 2 & 1   \\ \hline

4.80 & 2 & 1   \\ \hline
4.81 & 2 & 1   \\ \hline
4.82 & 2 & 1   \\ \hline
4.83 & 1 & 1   \\ \hline
4.84 & 1 & 1   \\ \hline
4.85 & 1 & 1   \\ \hline
4.86 & 1 & 1   \\ \hline
4.87 & 2 & 1   \\ \hline
4.88 & 1 & 1   \\ \hline
4.89 & 2 & 1   \\ \hline

4.90 & 0 & 0   \\ \hline
4.91 & 1 & 2   \\ \hline
4.92 & 1 & 1   \\ \hline
4.93 & 2 & 1   \\ \hline
4.94 & 1 & 1   \\ \hline
4.95 & 1 & 1   \\ \hline
4.96 & 1 & 1   \\ \hline
4.97 & 1 & 1   \\ \hline
4.98 & 0 & 0   \\ \hline
4.99 & 0 & 0   \\ \hline

4.100 & 1 & 2   \\ \hline
4.101 & 1 & 1   \\ \hline
4.102 & 1 & 1   \\ \hline
4.103 & 2 & 1   \\ \hline
4.104 & 1 & 1   \\ \hline
4.105 & 1 & 1   \\ \hline
4.106 & 1 & 1   \\ \hline
4.107 & 1 & 1   \\ \hline
{\bf 4.108} & 0 & 1   \\ \hline
\multicolumn{3}{|c|}{Fin} \\ \hline 
\end{tabular}
\end{minipage}
\end{tabular}
\bigskip
\caption{The graded genus and slice genus for virtual knots up to four crossings. Boldface is used for the classical knots $3_1={\bf 3.6}$ and $4_1={\bf 4.108}.$}
\label{table-3}
\end{table}

\bibliographystyle{halpha}    
\bibliography{bibmaker}


\newpage

\begin{figure}[H] {\small
 4.8 \hspace{-0.5cm} \includegraphics[scale=0.80]{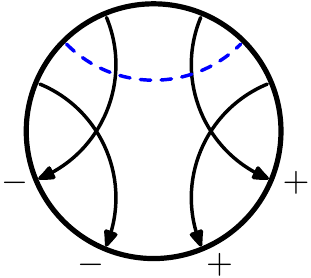} \hspace{0.2cm}
4.55 \hspace{-0.2cm}\includegraphics[scale=0.80]{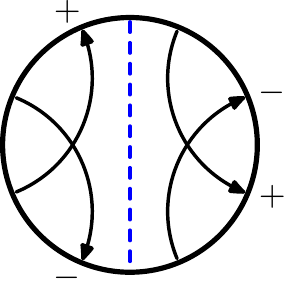} \hspace{0.2cm}  
4.56 \hspace{-0.2cm}\includegraphics[scale=0.80]{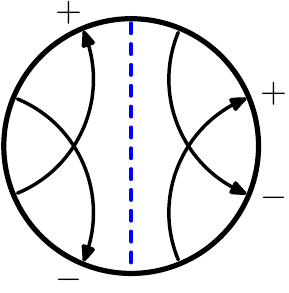}  \hspace{0.2cm}
4.58 \hspace{-0.4cm}\includegraphics[scale=0.80]{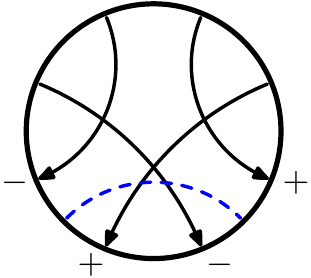} 

\bigskip
4.59 \hspace{-0.4cm}\includegraphics[scale=0.80]{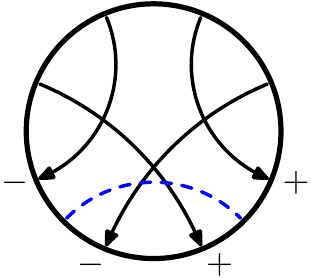}  \hspace{0.2cm}
4.71 \hspace{-0.4cm}\includegraphics[scale=0.80]{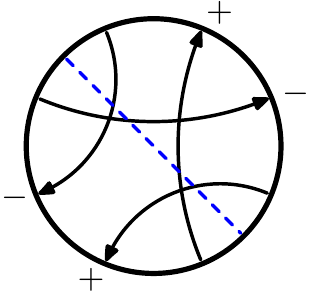}  \hspace{0.2cm}
4.72 \hspace{-0.4cm}\includegraphics[scale=0.80]{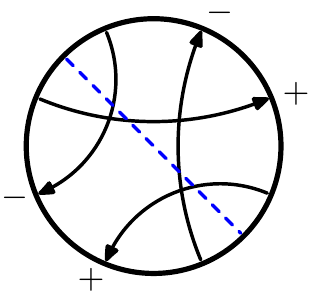} \hspace{0.2cm}
4.75 \hspace{-0.4cm}\includegraphics[scale=0.80]{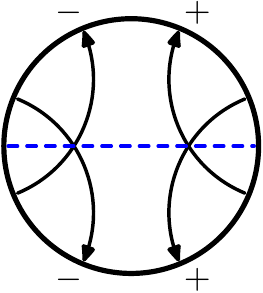} 

\bigskip
4.76 \hspace{-0.2cm} \includegraphics[scale=0.80]{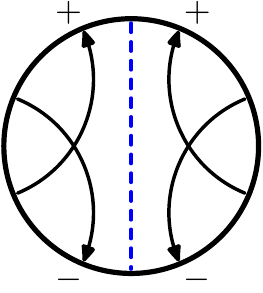} \hspace{0.2cm} 
4.77 \hspace{-0.4cm}\includegraphics[scale=0.80]{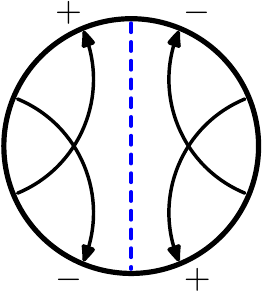} \hspace{0.2cm}  
4.90 \hspace{-0.2cm}\includegraphics[scale=0.80]{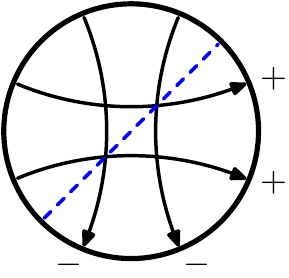} \hspace{0.2cm}
4.98 \hspace{-0.4cm} \includegraphics[scale=0.80]{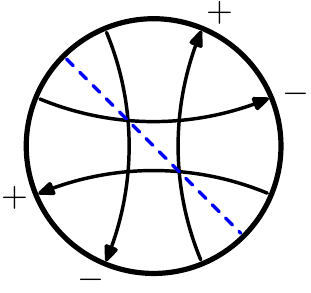} 

\bigskip
4.99 \hspace{-0.2cm}\includegraphics[scale=0.80]{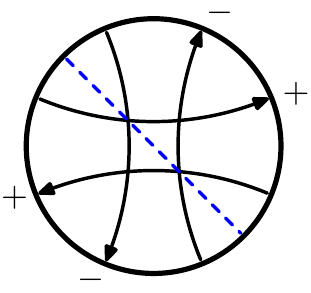} \hspace{0.0cm}
5.274 \hspace{-0.4cm}\includegraphics[scale=0.80]{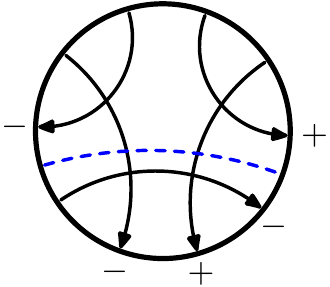} \hspace{0.0cm}
5.278 \hspace{-0.4cm} \includegraphics[scale=0.80]{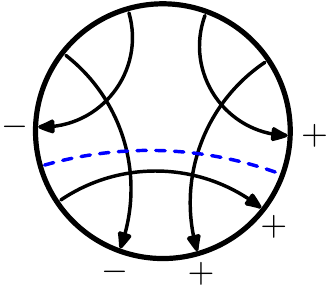} \hspace{0.0cm}
5.280 \hspace{-0.4cm}\includegraphics[scale=0.80]{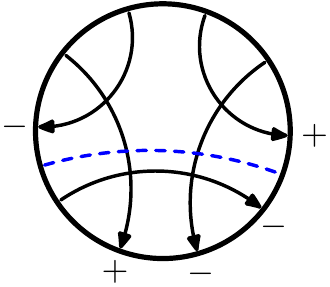} 

\bigskip  
5.284 \hspace{-0.4cm}\includegraphics[scale=0.80]{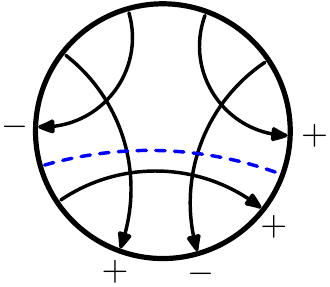} \hspace{0.0cm}
5.592 \hspace{-0.3cm} \includegraphics[scale=0.80]{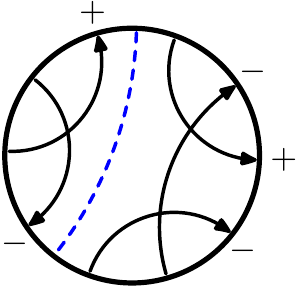} \hspace{0.0cm}
5.594 \hspace{-0.1cm}\includegraphics[scale=0.80]{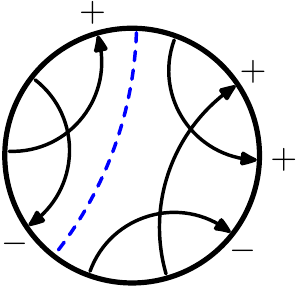} \hspace{0.0cm}  
5.595 \hspace{-0.1cm}\includegraphics[scale=0.80]{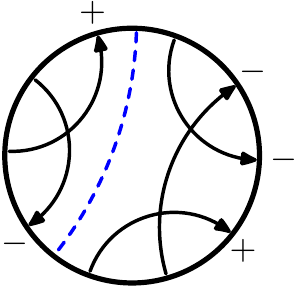} 

\bigskip
5.597 \hspace{-0.1cm} \includegraphics[scale=0.80]{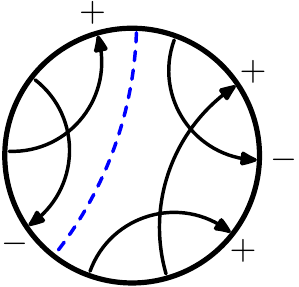} \hspace{0.0cm}
5.812 \hspace{-0.1cm}\includegraphics[scale=0.80]{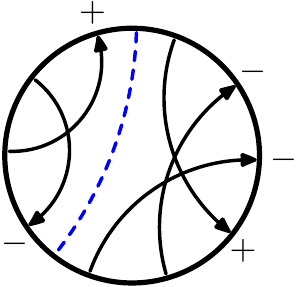} \hspace{0.0cm}
5.813 \hspace{-0.1cm} \includegraphics[scale=0.80]{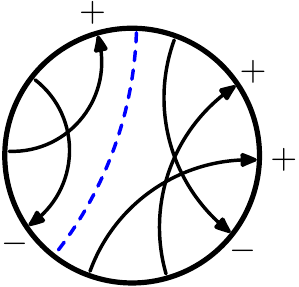}  \hspace{0.0cm}
5.882 \hspace{-0.1cm}\includegraphics[scale=0.80]{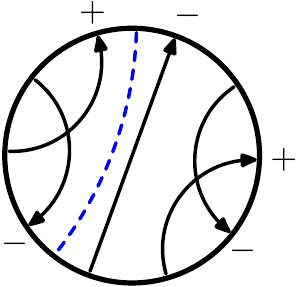}   

\bigskip
5.888 \hspace{-0.3cm}\includegraphics[scale=0.80]{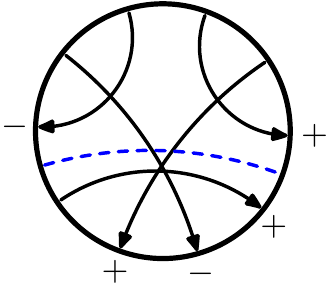} \hspace{0.0cm}  
5.890 \hspace{-0.3cm}\includegraphics[scale=0.80]{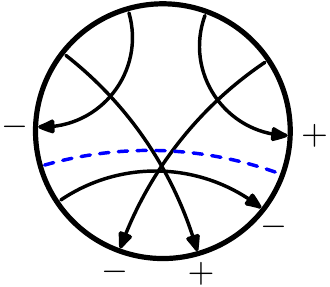} \hspace{0.0cm}
5.893 \hspace{-0.4cm} \includegraphics[scale=0.80]{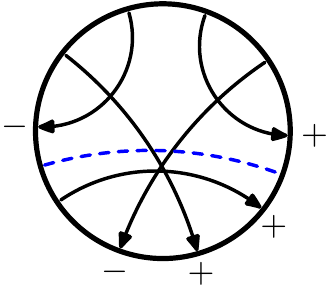} \hspace{0.0cm}
5.1241 \hspace{-0.3cm}\includegraphics[scale=0.80]{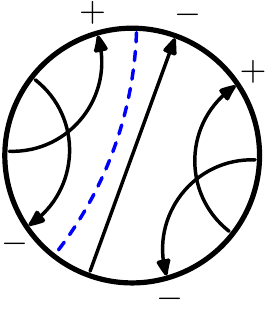}

\bigskip
5.1242 \hspace{-0.3cm} \includegraphics[scale=0.80]{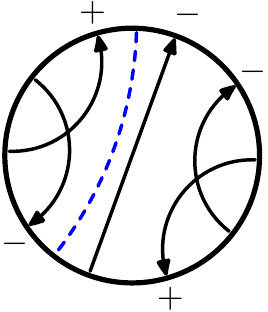}  \hspace{0.0cm}
5.1243 \hspace{-0.2cm}\includegraphics[scale=0.80]{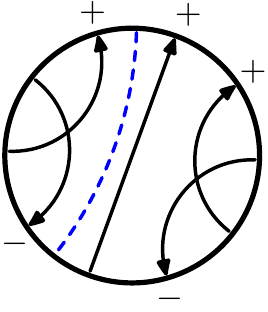}    \hspace{-0.2cm}
5.1347 \hspace{-0.3cm}\includegraphics[scale=0.80]{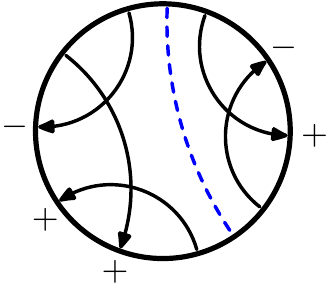} \hspace{-0.2cm}  
5.1348 \hspace{-0.4cm}\includegraphics[scale=0.80]{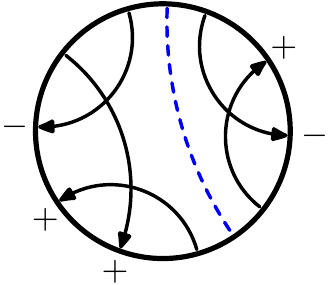} 
}
\end{figure}

\newpage
\begin{figure}[H] {\small

\bigskip

5.1571 \hspace{-0.4cm} \includegraphics[scale=0.80]{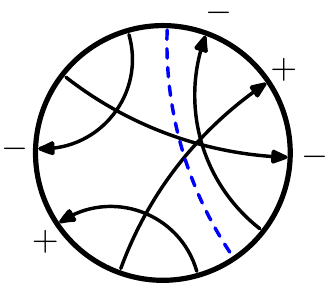} \hspace{-0.2cm}
5.1572 \hspace{-0.3cm}\includegraphics[scale=0.80]{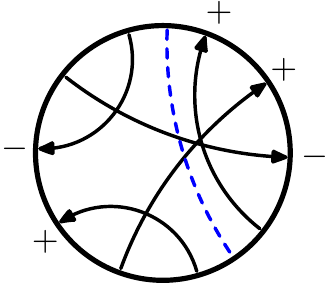} \hspace{-0.2cm}
5.1576 \hspace{-0.3cm} \includegraphics[scale=0.80]{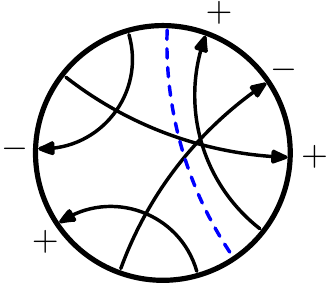}  \hspace{-0.2cm}
5.1585 \hspace{-0.3cm}\includegraphics[scale=0.80]{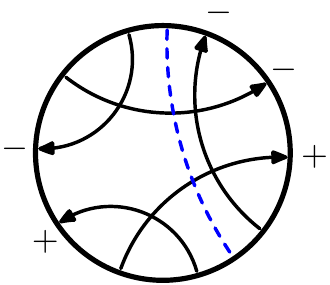}

\bigskip
\small{5.1586} \hspace{-0.3cm}\includegraphics[scale=0.80]{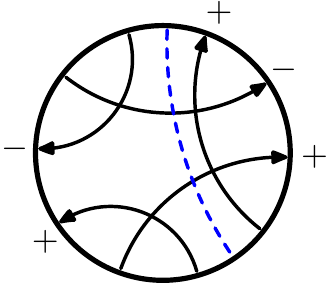} \hspace{-0.2cm}
5.1591 \hspace{-0.3cm}\includegraphics[scale=0.80]{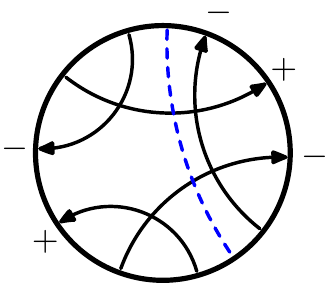} \hspace{-0.2cm}
5.1592 \hspace{-0.3cm} \includegraphics[scale=0.80]{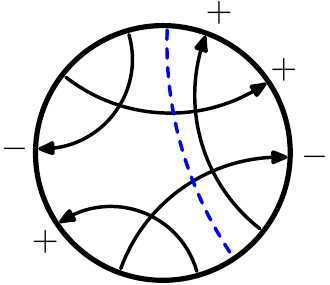} \hspace{-0.2cm}
5.1677 \hspace{-0.3cm}\includegraphics[scale=0.80]{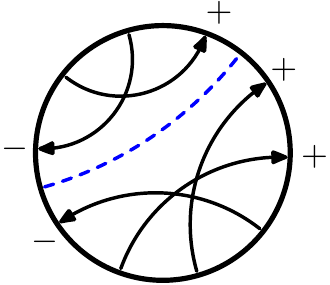} 

\bigskip
5.1678 \hspace{-0.3cm} \includegraphics[scale=0.80]{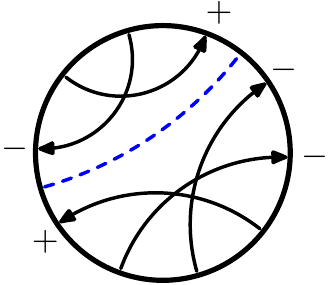} \hspace{0.0cm}
5.1969 \hspace{-0.3cm}\includegraphics[scale=0.80]{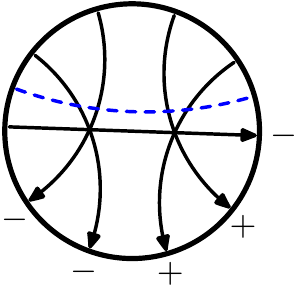}   \hspace{0.0cm}
5.2001 \hspace{-0.3cm}\includegraphics[scale=0.80]{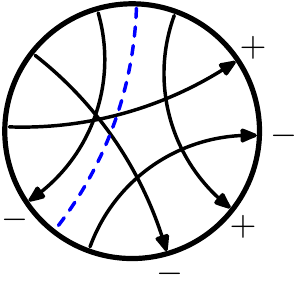} \hspace{0.0cm}  
5.2002 \hspace{-0.3cm}\includegraphics[scale=0.80]{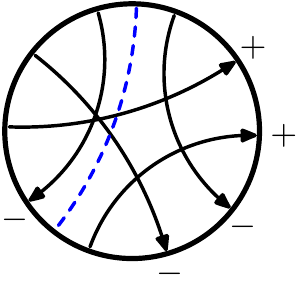}

\bigskip
5.2005 \hspace{-0.3cm} \includegraphics[scale=0.80]{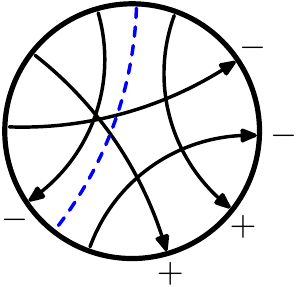} \hspace{0.0cm}
5.2024 \hspace{-0.3cm} \includegraphics[scale=0.80]{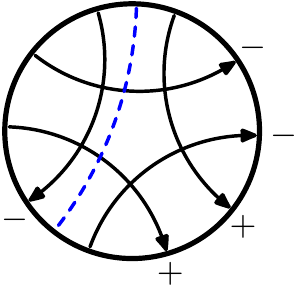}  \hspace{0.0cm}
5.2025 \hspace{-0.3cm}\includegraphics[scale=0.80]{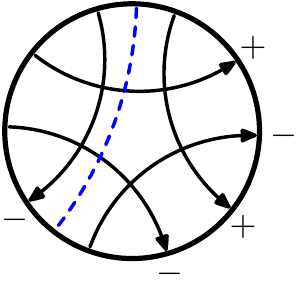} \hspace{0.0cm}  
5.2105 \hspace{-0.3cm}\includegraphics[scale=0.80]{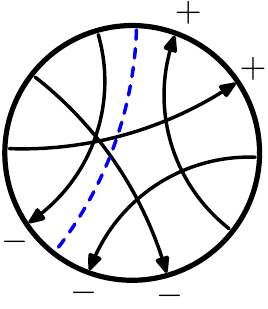}  

\bigskip
5.2106 \hspace{-0.3cm} \includegraphics[scale=0.80]{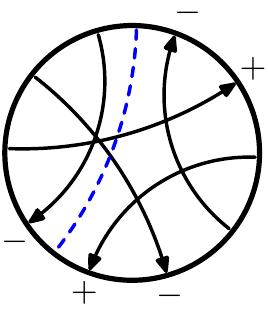} \hspace{0.0cm}
5.2109 \hspace{-0.3cm}\includegraphics[scale=0.80]{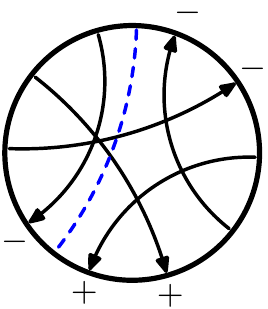} \hspace{0.0cm}
5.2115 \hspace{-0.3cm} \includegraphics[scale=0.80]{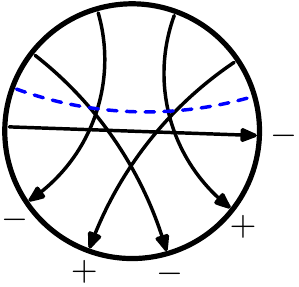}  \hspace{0.0cm}
5.2131 \hspace{-0.3cm}\includegraphics[scale=0.80]{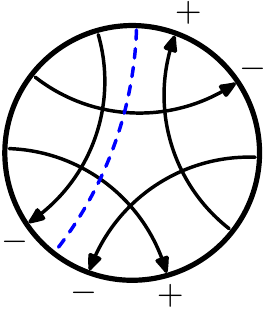}   

\bigskip
\small{5.2132} \hspace{-0.3cm}\includegraphics[scale=0.80]{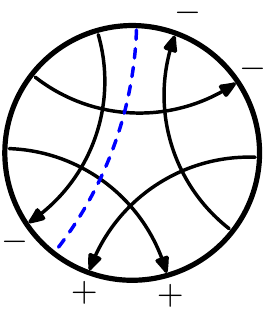} \hspace{0.0cm}  
5.2133 \hspace{-0.3cm}\includegraphics[scale=0.80]{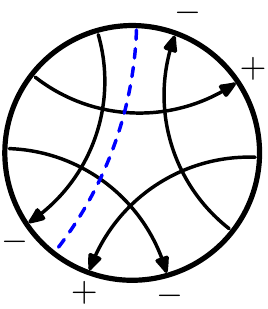} \hspace{0.0cm}
5.2154 \hspace{-0.3cm} \includegraphics[scale=0.80]{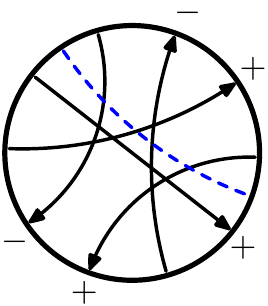} \hspace{0.0cm}
5.2160 \hspace{-0.3cm}\includegraphics[scale=0.80]{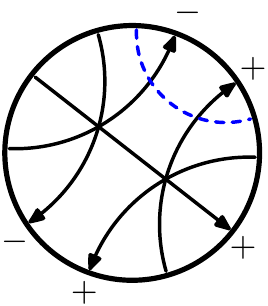}  

\bigskip
5.2212 \hspace{-0.3cm} \includegraphics[scale=0.80]{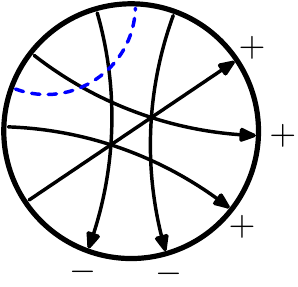} \hspace{0.2cm} 
5.2243 \hspace{-0.3cm}\includegraphics[scale=0.80]{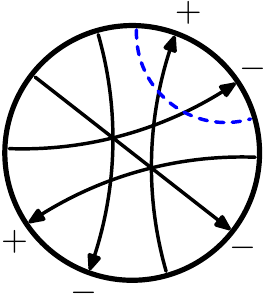}   
}

\bigskip
\caption{Slice Gauss diagrams of virtual knots} \label{fig-slice}
\end{figure}

\end{document}